\newtheorem{thm}{Theorem}[section]
\newtheorem{lem}[thm]{Lemma}
\newtheorem{cor}[thm]{Corollary}
\newtheorem{prop}[thm]{Proposition}
\theoremstyle{definition}
\newtheorem{defn}[thm]{Definition}
\newtheorem{rem}[thm]{Remark}
\numberwithin{equation}{section}
\newcommand{\thmref}[1]{Theorem~\textup{\ref{#1}}}
\newcommand{\lemref}[1]{Lemma~\textup{\ref{#1}}}
\newcommand{\propref}[1]{Proposition~\textup{\ref{#1}}}
\newcommand{\remref}[1]{Remark~\textup{\ref{#1}}}
\newcommand{\CC}{\mathcal C}
\renewcommand{\AA}{\mathcal A}
\newcommand{\supp}{\operatorname{supp}}
\renewcommand{\a}{\alpha}
\renewcommand{\b}{\beta}
\renewcommand{\d}{\delta}
\newcommand{\p}{\phi}
\newcommand{\s}{\sigma}
\newcommand{\g}{\gamma}
\renewcommand{\o}{\omega}
\newcommand{\D}{\Delta}
\renewcommand{\t}{\theta}
\renewcommand{\P}{\Phi}
\newcommand{\pp}{\mathfrak p}
\newcommand{\nn}{\mathfrak n}
\newcommand{\mm}{\mathfrak m}
\DeclareMathOperator{\aut}{Aut}
\DeclareMathOperator{\ad}{Ad}
\DeclareMathOperator{\fix}{Fix}
\newcommand{\dn}{\downarrow}
\newcommand{\iso}{\overset{\cong}{\longrightarrow}}
\newcommand{\what}{\widehat}
\newcommand{\wilde}{\widetilde}
\DeclareMathOperator*{\spn}{span}
\DeclareMathOperator*{\clspn}{\overline{\spn}}
\DeclareMathOperator{\obj}{Obj}
\DeclareMathOperator{\mor}{Mor}
\newcommand{\midtext}[1]{\quad\text{#1}\quad}
\newcommand{\righttext}[1]{\quad\text{#1 }}
\renewcommand{\and}{\midtext{and}}
\newcommand{\id}{\text{\textup{id}}}
\newcommand{\rt}{\textup{rt}}
\newcommand{\lt}{\textup{lt}}
\newcommand{\DD}{\mathcal D}
\newcommand{\RCP}{\textup{RCP}}
\newcommand{\CP}{\textup{CP}}
\newcommand{\red}{\textup{Red}}
\newcommand{\nor}{\textup{Nor}}
\newcommand{\aug}{\textup{aug}}
\renewcommand{\>}{\rangle}
\begin{document}

\title[Proper actions and fixed-point algebras]
{Proper actions, fixed-point algebras\\ and naturality in nonabelian duality}

\subjclass[2000]{Primary 46L55; Secondary 46M15, 18A25}

\keywords{proper actions, fixed-point algebra, crossed product, coaction, Landstad duality, comma category}

\author[Kaliszewski]{S. Kaliszewski}
\address{Department of Mathematics and Statistics
\\Arizona State University
\\Tempe, Arizona 85287, USA}
\email{kaliszewski@asu.edu}
\author[Quigg]{John Quigg}
\address{Department of Mathematics and Statistics
\\Arizona State University
\\Tempe, Arizona 85287, USA}
\email{quigg@asu.edu}
\author[Raeburn]{Iain~Raeburn}
\address{Iain Raeburn, School  of Mathematics and Applied Statistics, University of Wollongong, NSW 2522, Australia}
\email{raeburn@uow.edu.au}
\date{17 December 2007}

\begin{abstract}
Suppose a locally compact group $G$ acts freely and properly on a locally compact Hausdorff space $X$, and let $\g$ be the induced action on $C_0(X)$. We consider a category in which the objects are $C^*$-dynamical systems $(A,G,\a)$ for which there is an equivariant homomorphism of $(C_0(X),\g)$ into the multiplier algebra $M(A)$. Rieffel has shown that such systems are proper and saturated, and hence have a generalized fixed-point algebra $A^\a$ which is Morita equivalent to $A\times_{\a,r}G$. We show that the assignment $(A,\a)\mapsto A^\a$ is functorial, and that Rieffel's Morita equivalence is natural in a suitable sense. We then use our results to prove a categorical version of Landstad duality which characterizes crossed products by coactions, and to prove that Mansfield imprimitivity for crossed products by homogeneous spaces is natural.
\end{abstract}

\thanks{Iain Raeburn was supported by the Australian Research Council, through the ARC Centre for Complex Dynamic Systems and Control, and by the Centre de Recerca Matem{\` a}tica at the Universitat Aut{\` o}noma de Barcelona.}

\maketitle

\section*{Introduction}

An action $\a$ of a locally compact group $G$ on a non-unital $C^*$-algebra $A$ typically has very few fixed-points, but there has nevertheless been considerable interest in situations where one can find a reasonable analogue of the fixed-point algebra inside the multiplier algebra $M(A)$ \cite{proper, exel, meyer, integrable, HRW05}. To see what one might hope to achieve, consider the action $\g$ of $G$ on $C_0(X)$ induced by an action of $G$ on a locally compact space $X$. When $G$ acts properly on $X$, the orbit space $X/G$ is locally compact and Hausdorff, and the algebra $C_0(X/G)$ behaves much like a fixed-point algebra for the system. For example, if $G$ acts freely as well as properly on $X$, then a theorem of Green \cite{g2} says that $C_0(X/G)$ is Morita equivalent to the crossed product $C_0(X)\times_{\g} G$. The crucial feature of this situation is that there is a dense $\g$-invariant $*$-subalgebra $C_c(X)$ where one can average functions $f$ over the orbits to obtain a function $E(f):x\cdot G\mapsto \int_G f(x\cdot t)\,dt$ in $C_0(X/G)$.

There are many other situations in which variations of Green's construction yield useful Morita equivalences. In search of a systematic approach to such equivalences, Rieffel studied a family of \emph{proper actions} $(A,\a)$ for which there is a dense invariant $*$-subalgebra $A_0$ with properties like those of the subalgebra $C_c(X)$ of $(C_0(X),\g)$, and for which there is a \emph{generalized fixed-point algebra} $A^\a$ in $M(A)$ \cite{proper}. He also identified a class of \emph{saturated} proper actions for which $A^\a$ is Morita equivalent to the reduced crossed product $A\times_{\a,r}G$ (see \cite[Corollary~1.7]{proper}). Rieffel's theory was developed with some new examples in mind \cite{rieff89}, and it has since had other applications (see, for example, \cite{abadie}, \cite{landrae} and \cite{pr}). In particular, an Huef and Raeburn have used Rieffel's results to extend Mansfield's imprimitivity theorem for crossed products by coactions to crossed products by homogeneous spaces \cite{HR:mansfield}.

On the face of it, Rieffel's generalized fixed-point algebra $A^\a$ and the associated Morita equivalence depend on the choice of dense subalgebra $A_0$. However, Rieffel subsequently showed in \cite[Theorem~5.7]{integrable} that if $G$ acts properly on $X$ and there is an equivariant homomorphism $\phi:(C_0(X),\g)\to (A,\alpha)$, then $\alpha$ is proper in the sense of \cite{proper} with respect to $A_0=\spn C_c(X)AC_c(X)$. In the light of the work of the first two authors in \cite{clda}, Rieffel's hypothesis says that the system $(A,\a)$ belongs to a \emph{comma category} associated to the system $(C_0(X),\g)$, and his theorem says that, for systems in this comma category, there is a canonical choice of dense subalgebra $A_0$.

In this paper we show that when $G$ acts properly on $X$, Rieffel's generalized fixed-point algebra gives a functor $\fix$ on the comma category associated to $(C_0(X),\g)$, and that when the action of $G$ on $X$ is free, Rieffel's Morita equivalence gives a natural equivalence between $\fix$ and a reduced crossed-product functor. We then give several applications of these results to nonabelian duality. The first (taking $X=G$) is a categorical version of Landstad duality for coactions \cite{Q:landstad}, which identifies crossed products by coactions as the dynamical systems which belong to the comma category associated to $(C_0(G),\g)$; our result neatly complements those of \cite{clda} for actions,
and when
combined with them
gives new information about iterated Landstad duality. In our second application, we take $(X,G)=(G,H)$ where $H$ is a closed subgroup of $G$, and deduce that the Morita equivalence implementing the imprimitivity theorem for crossed products by homogeneous spaces in \cite{HR:mansfield} is natural; this result complements the result for coactions of quotient groups in \cite[Theorem~4.21]{enchilada}.

\smallskip

We begin with a short section on preliminaries, in which we introduce
(some of)
the categories of interest to us. In \S\ref{fix}, we investigate an averaging process $E$ developed in \cite{Q:landstad} for use in nonabelian duality. For systems $(A,\a,\phi)$ in the comma category, we can average elements $fag$ of $C_c(X)AC_c(X)$, and the elements $E(fag)$ span a $*$-algebra whose closure we denote by $\fix(A,\a,\phi)$. Our first main result (Corollary~2.8) says that $\fix$ extends to a functor on the comma category. In \S\ref{bimodules}, we prove that, when $(A,\a,\phi)$ is an element of the comma category, $(A,\a)$ is proper with respect to $A_0:=\spn C_c(X)AC_c(X)$ and has $\fix(A,\a,\phi)$ as its generalized fixed-point algebra. Theorem~\ref{XAnatequiv} is our main theorem on the naturality of Rieffel's Morita equivalence.

Our new version of categorical Landstad duality for crossed products by coactions is Theorem~\ref{Thmfromcldc}. Theorem~3.3 of \cite{Q:landstad} gives the required result on objects (see Theorem~\ref{Qld}), so our main task is to extend this to morphisms.
As in any project involving coactions, technical choices have to be made: here, the main references of interest to us \cite{proper, Q:landstad, HR:mansfield} use reduced crossed products and reduced coactions, so in an attempt to be user-friendly we have used reduced coactions wherever possible. Nevertheless, there are other options, and we have discussed them briefly in Corollary~\ref{normmax} and Remark~\ref{normmax2}, relegating the necessary background details to the Appendix.
In \S\ref{compare}, we combine our categorical Landstad duality for coactions with the duality for actions of \cite{clda}, and in \S\ref{sec-man} we apply our results to Mansfield imprimitivity.
The Appendix contains a new categorical version of the relationship between normal and reduced coactions which may be of independent interest,
as well as the facts about reduction and normalization of coactions needed in the main text.

\section{Preliminaries}
\label{prelim}

We begin with a convention: if $X$ and $Y$ are subspaces of a $C^*$-algebra $A$, then $XY$ denotes the subspace $\spn\{xy:x\in X,\ y\in Y\}$. If $A$ and $B$ are $C^*$-algebras, a homomorphism $A\to M(B)$ is \emph{nondegenerate} if $\phi(A)B$ is dense in $B$. Such  a homomorphism extends uniquely to a unital homomorphism $\phi:M(A)\to M(B)$ (see \cite[Corollary 2.51]{tfb}, for example), and hence nondegenerate homomorphisms can be composed. The objects in our basic category $\CC$ are $C^*$-algebras, and the morphisms from one object $A$ to another $B$ are nondegenerate homomorphisms $\phi:A\to M(B)$.

Throughout this paper, we consider a fixed locally compact group $G$, and various categories of actions and coactions of $G$ on $C^*$-algebras. The objects in the category $\AA(G)$ consist of dynamical systems $(A,\a)$ in which $\a$ is a continuous action of $G$ on $A$, and the morphisms from $(A,\a)$ to $(B,\b)$ are morphisms $\phi:A\to B$ in $\CC$ such that $\phi\circ\alpha_s=\beta_s\circ\phi$ for every $s\in G$. We will introduce the various categories of coactions as they are required.

By a ``natural isomorphism'' between two functors we mean exactly the same as we meant by a ``natural equivalence'' in \cite{enchilada}. We now believe that ``natural isomorphism'' is the more generally accepted term in category theory.

Let $a$ be an object in a category $C$.
The \emph{comma category} $a\downarrow C$ has objects $(x,f)$, where $f:a\to x$ is a morphism in $C$, and a morphism
$h:(x,f)\to (y,g)$ in $a\dn C$ is a morphism $h:x\to y$ in $C$ such that $h\circ
f=g$.

\begin{rem}\label{aug-rem}
We emphasize
that our underlying category $\CC$ is not the same as the category denoted by $\CC$ in \cite{enchilada}, which we will denote here by $\CC_{\aug}$, and refer to as the \emph{augmented category} of $C^*$-algebras. The objects in $\CC_{\aug}$ are again $C^*$-algebras, but in $\CC_{\aug}$ the morphisms from $A$ to $B$ are isomorphism classes of
$A$-$B$ correspondences (called right-Hilbert $A$-$B$ bimodules in \cite{enchilada}).
A morphism $\phi\colon A\to B$ in $\CC$ gives rise to an $A$-$B$ correspondence, and
hence to a morphism $[\phi]\colon A\to B$ in $\CC_{\aug}$.
Moreover, the assignment $\phi\mapsto [\phi]$ gives a functor from $\CC$ to $\CC_{\aug}$;
this point will be important in~\S\ref{bimodules} (see Remark~\ref{intoBEcat}).
While this functor is neither injective nor surjective on morphisms,
we do have $[\phi]=[\psi]$ in $\CC_{\aug}$  if and only if $\psi=\ad u\circ\psi$ for some $u\in U\!M(B)$ \cite[Proposition~2.3]{taco}.
\end{rem}

\section{Fixed points}
\label{fix}

In this section, we consider a proper right action of $G$ on a locally compact space $X$,  the associated action $\gamma:G \to\aut C_0(X)$ given by $\gamma_t(f)(x)=f(x\cdot t)$, and an element $(A,\alpha,\phi)=((A,\a),\p)$ of the comma category $(C_0(X),\g)\dn \AA(G)$. The morphism $\phi$ is implemented by a nondegenerate homomorphism $\phi:C_0(X)\to M(A)$, and $A$ thus becomes a bimodule over $C_0(X)$; to simplify notation, we often write $fag$ for $\phi(f)a\phi(g)$. Our goal in this section is to prove that applying the averaging process $E$ of \cite[Section~3]{Q:landstad} to the dense subalgebra
\begin{equation}\label{defA0}
A_0:=C_c(X)AC_c(X):=\spn\{fag:f,g\in C_c(X),\ a\in A\}
\end{equation}
of $A$ gives a $C^*$-subalgebra $\fix(A,\alpha,\phi):=\overline{E(A_0)}$ of $M(A)$, and that this is the object map of a functor $\fix$ from $(C_0(X),\g)\dn \AA(G)$ to $\CC$.

We begin by recalling the properties of the averaging process of \cite[Section~3]{Q:landstad}, which is based on \cite[Section~2]{connes} and \cite[note added in
proof]{connes2}. Our first comments apply to arbitrary systems $(A,\a)$ in $\AA(G)$.

As in \cite[Definition~3.4]{Q:landstad}, we let $\pp$ be the set of multipliers $a\in M(A)^+$ such that there exists an element $E(a)$ in $M(A)^+$ satisfying
\[
\omega(E(a))=\int \omega(\a_s(a))\,ds\]
for every positive functional $\omega$ on $A$. We know from \cite[note added in
proof]{connes2} and
\cite[Lemma~3.5]{Q:landstad} that $\nn:=\{a\in M(A):a^*a\in\pp\}$ is a left ideal in $M(A)$, and that $\mm:=\nn^*\nn$ is a $*$-subalgebra of $M(A)$ with $\mm^+=\pp$ and $\mm=\spn \pp$. Corollary~3.6 of \cite{Q:landstad} says that $a\mapsto E(a)$ extends to a positive linear
map $E=E^\a:\mm\to  M(A)$ such that
\begin{equation}\label{defE}
\omega(E(a))=\int \omega(\a_s(a))\,ds\righttext{for all $\omega\in A^*$.}
\end{equation}
Since $\nn$ is a left ideal in $M(A)$, we have
\[
\mm M(A)\mm=\nn^*(\nn M(A)\nn^*)\nn\subset \nn^* M(A)\nn\subset\mm,
\]
and for fixed $b,c\in \mm$, the map $a\mapsto E(bac)$ is norm-continuous on $M(A)$ \cite[Corollary~3.6(3)]{Q:landstad}.

\begin{lem}
\label{fixed}
If $d\in
M(A)^\a$
then for all $a\in \mm$ we have $ad\in \mm$ and $E(ad)=E(a)d$.
\end{lem}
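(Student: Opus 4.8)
The plan is to reduce the statement to two facts: that right multiplication by $d$ preserves the left ideal $\nn$, and that the defining relation \eqref{defE} for $E$ is compatible with this multiplication. Since $\a$ extends to an action by automorphisms of $M(A)$ and $\a_s(d)=d$ for all $s$, the computations throughout will exploit the identity $\a_s(xd)=\a_s(x)\a_s(d)=\a_s(x)d$. Note that $\nn$ being a \emph{left} ideal does not by itself give closure under right multiplication by $d$, so the hypothesis $d\in M(A)^\a$ must be used to get membership.

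First I would show that $\nn d\subseteq\nn$. Given $c\in\nn$, so that $c^*c\in\pp$, I would verify that $(cd)^*(cd)=d^*(c^*c)d$ again lies in $\pp$, with $E(d^*(c^*c)d)=d^*E(c^*c)d$. To see this, observe that $d^*(c^*c)d$ is a positive multiplier and $d^*E(c^*c)d\in M(A)^+$, and that for every positive functional $\omega$ on $A$ the assignment $x\mapsto\omega(d^*xd)$ is again a positive functional on $A$; applying the defining relation for $c^*c\in\pp$ to this functional, together with $\a_s(d^*(c^*c)d)=d^*\a_s(c^*c)d$, yields exactly $\omega(d^*E(c^*c)d)=\int\omega(\a_s(d^*(c^*c)d))\,ds$. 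Hence $cd\in\nn$. Now for arbitrary $a\in\mm=\nn^*\nn$, writing $a=\sum_i b_i^*c_i$ with $b_i,c_i\in\nn$ gives $ad=\sum_i b_i^*(c_id)\in\nn^*\nn=\mm$, which establishes the first claim.

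With membership in hand, the formula follows from \eqref{defE}. For $\omega\in A^*$, the functional $x\mapsto\omega(xd)$ again lies in $A^*$, so that
\[
\omega(E(ad))=\int\omega(\a_s(ad))\,ds=\int\omega(\a_s(a)d)\,ds=\omega(E(a)d),
\]
where the middle equality uses $\a_s(d)=d$ and the last applies \eqref{defE} to the functional $x\mapsto\omega(xd)$. Since $\omega\in A^*$ is arbitrary, $E(ad)=E(a)d$.

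I expect the membership assertion $ad\in\mm$ to be the main obstacle. The formula itself is a routine manipulation of \eqref{defE}, but to make sense of $E(ad)$ at all one must first locate $ad$ inside the domain $\mm$, and the only tool for this is the positivity-based definition of $\pp$. The delicate point is that the defining relation for $\pp$ is tested against \emph{positive} functionals, so one must check that conjugation by the fixed element $d$ preserves both positivity of the multiplier $c^*c$ and positivity of the testing functional; the hypothesis $d\in M(A)^\a$ is precisely what guarantees that $\a_s$ commutes with this conjugation.
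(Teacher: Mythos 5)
Your proof is correct and follows essentially the same route as the paper's: both establish $\nn d\subseteq\nn$ by testing the defining relation for $\pp$ against the conjugated positive functional $x\mapsto\omega(d^*xd)$ (exploiting $\a_s(d)=d$), then pass to $\mm=\nn^*\nn$ and derive the formula $E(ad)=E(a)d$ by applying \eqref{defE} to the functional $x\mapsto\omega(xd)$. The only difference is cosmetic: you write out the decomposition $a=\sum_i b_i^*c_i$ explicitly, where the paper simply cites $\mm=\nn^*\nn$.
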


\begin{proof}
We first claim that $\nn d\subset \nn$. Let $a\in \nn$, so that $a^*a\in \pp$.
To show $ad\in \nn$ we need $d^*a^*ad\in \pp$, so by definition of $\pp$ we need
to show that there exists $b\in M(A)^+$ such that
\[
\omega(b)=\int \omega(\a_s(d^*a^*ad))\,ds\righttext{for all $\omega\in A^*{}^+$.}
\]
We show that $b:=d^*E(a^*a)d$ works. For $\o\in A^*{}^+$, we define $d\cdot\omega\cdot d^*(b)=\o(d^*bd)$. Then $d\cdot\omega\cdot d^*$ is in $A^*{}^+$, and hence
\begin{align*}
\omega(d^*E(a^*a)d)
&=d\cdot\omega\cdot d^*(E(a^*a))\\
&=\int d\cdot \omega\cdot d^*(\a_s(a^*a))\,ds\\
&=\int \omega(\a_s(d^*a^*ad))\,ds\righttext{(because $\a_s(d)=d$).}
\end{align*}

Now the formula $\mm=\nn^*\nn$ implies that $\mm d\subset \mm$, and for $a\in
\mm$ and $\omega\in A^*$ we have
\begin{align*}\omega(E(ad))
&=\int \omega(\a_s(ad))\,ds
=\int d\cdot \omega(\a_s(a))\,ds\\&=d\cdot \omega(E(a))
=\omega(E(a)d).
\qedhere
\end{align*}
\end{proof}

We can apply all this to the system $(C_0(X),\g)$. Then for each positive function $f$ in the subalgebra $C_c(X)$ and each $x\in X$, the function $t\mapsto f(x\cdot t)$ has compact support, and it follows from Fubini's theorem that $f\in \pp$ and $E(f)$ is pointwise multiplication by the function $x\mapsto \int_Gf(x\cdot t)\,dt$. It follows that $C_c(X)\subset \mm$, with $E(f)$ given by the same formula.

When $(A,\a,\phi)$ is in the comma category $(C_0(X),\g)\dn\AA(G)$, Proposition~1.4 of \cite{qr:induced} implies that $\phi(C_c(X))\subset \mm$.  Thus
\[
C_c(X)M(A) C_c(X)\subset \mm M(A)\mm\subset \mm,
\]
and the maps $a\mapsto E(fag)$ are norm-continuous for every fixed pair $f,g\in C_c(X)$.

We think of the expectation $E$ as being given by a strictly convergent integral $\int\alpha_s(a)\,ds$, and then the next lemma says that, when the usual norm-convergent $A$-valued integral also makes sense, the two coincide.
\begin{lem}\label{E=int}
For $a\in A$ and $f,g,h\in C_c(X)$, the function $s\mapsto f\a_s(gah)$ belongs to $C_c(G,A)$, and its $A$-valued integral satisfies
\[
\int_G f\a_s(gah)\,ds=fE(gah).
\]
\end{lem}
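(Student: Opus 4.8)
The plan is to prove the two assertions in turn: first that $s\mapsto f\a_s(gah)$ lies in $C_c(G,A)$, and then that its $A$-valued (Bochner) integral agrees with $fE(gah)$. For the integral identity I will test both sides against the functionals in $A^*$ that already appear in the defining formula \eqref{defE} for $E$.

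For membership in $C_c(G,A)$, continuity is the easy half: the element $gah=\phi(g)a\phi(h)$ lies in $A$, the action $\a$ is strongly continuous, so $s\mapsto\a_s(gah)$ is norm-continuous, and left multiplication by the fixed multiplier $\phi(f)$ preserves this. The substance, and where I expect the real work, is compact support, which is exactly the point at which properness enters. Using that $\phi$ is equivariant, so that $\a_s\circ\phi=\phi\circ\g_s$, I rewrite
\[
f\a_s(gah)=\phi(f)\phi(\g_s(g))\a_s(a)\phi(\g_s(h))=\phi\big(f\,\g_s(g)\big)\,\a_s(a)\,\phi(\g_s(h)),
\]
so the whole expression vanishes whenever the function $f\,\g_s(g)\in C_0(X)$ vanishes. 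Since $(f\,\g_s(g))(x)=f(x)g(x\cdot s)$, this product is nonzero only if there is an $x\in\supp f$ with $x\cdot s\in\supp g$; writing $K=\supp f$ and $L=\supp g$, this says $(K\cdot s)\cap L\neq\emptyset$. Properness of the action of $G$ on $X$ says precisely that $\{s\in G:(K\cdot s)\cap L\neq\emptyset\}$ is compact for compact $K,L$, and this set contains the support of our function; together with continuity this gives membership in $C_c(G,A)$.

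For the integral identity I fix $\omega\in A^*$ and pull it through the integral, which is legitimate now that the integrand is known to be continuous and compactly supported:
\[
\omega\Big(\int_G f\a_s(gah)\,ds\Big)=\int_G\omega\big(\phi(f)\a_s(gah)\big)\,ds.
\]
Absorbing $\phi(f)$ into the functional via the left action of $M(A)$ on $A^*$, I set $\omega'(\,\cdot\,)=\omega(\phi(f)\,\cdot\,)$, which again lies in $A^*$; since $gah\in\mm$, the defining property \eqref{defE} of $E$ gives
\[
\int_G\omega'\big(\a_s(gah)\big)\,ds=\omega'(E(gah))=\omega\big(\phi(f)E(gah)\big)=\omega\big(fE(gah)\big).
\]
Thus $\omega$ takes the same value on the two sides for every $\omega\in A^*$. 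Because the functionals in $A^*$ separate the points of $M(A)$ (replacing $\omega$ by $a\cdot\omega$ shows $\omega(ma)=0$ for all $a\in A$, which forces $m=0$), I conclude that $\int_G f\a_s(gah)\,ds=fE(gah)$; this incidentally shows that the right-hand side lies in $A$. The only genuine obstacle is the compact-support step, where properness is essential, and once the integrand is in $C_c(G,A)$ the identity is just bookkeeping with \eqref{defE}.
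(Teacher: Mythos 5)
Your proof is correct and follows essentially the same route as the paper's: the same properness argument for compact support (your set $\{s:(K\cdot s)\cap L\neq\emptyset\}$ is exactly the paper's $\{s:\supp f\cap(\supp g)\cdot s^{-1}\neq\emptyset\}$), followed by pulling an arbitrary $\omega\in A^*$ through the integral and absorbing $\phi(f)$ into the functional (your $\omega'$ is the paper's $\omega_f$) so that the defining property \eqref{defE} applies to $gah\in\mm$. Your explicit remarks on norm-continuity of the integrand and on why $A^*$ separates points of $M(A)$ only make explicit steps the paper leaves implicit.
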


\begin{proof}
We have $f\a_s(gah)=f\g_s(g)\a_s(a)\g_s(h)$, and $f\g_s(g)=0$ unless $\supp f$ intersects $(\supp g)\cdot s^{-1}$; since $G$ acts properly on $X$,
\[
\{s\in G:(\supp f)\cap(\supp g)\cdot s^{-1} \not=\emptyset\}
\]
is contained in a compact set. Thus $s\mapsto f\a_s(gah)$ has compact support, and the integral $\int_G f\a_s(gah)\,ds$ gives a well-defined element of $A$ (as in \cite[Lemma~C.3]{tfb}, for example). Now let $\o\in A^*$. Lemma~C.3 of \cite{tfb} also implies that the bounded linear functional $\o$ pulls through the integral:
\[
\o\Big(\int_G f\a_s(gah)\,ds\Big)=\int_G \o(f\a_s(gah))\,ds.
\]
We can define $\o_f\in A^*$ by $\o_f(b)=\o(fb)$, and then \eqref{defE}\ gives
\begin{align*}
\int \o(f\a_s(gah))\,ds
&=\int \o_f(\a_s(gah))\,ds\\
&=\o_f(E(gah))\\
&=\o(fE(gah)).
\end{align*}
Since $\o\in A^*$ was arbitrary, the result follows.
\end{proof}

Slightly different versions of the next lemma were implicitly used in the proofs of \cite[Theorem~5.7]{integrable} and \cite[Theorem~4.4]{HRW05}.

\begin{lem}\label{key lemma}
For $a\in A$ and $f,g,h\in C_c(X)$, there exists $k\in C_c(X)$ such that
\[
fE(gah)=fE(gah)k.
\]
In particular, the subalgebra $A_0$ defined in \eqref{defA0} satisfies
\[
C_c(X) E(A_0)\subset A_0\and A_0 E(A_0)\subset A_0.
\]
\end{lem}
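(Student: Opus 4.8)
The plan is to reduce everything to the integral formula of \lemref{E=int}. Writing $f\a_s(gah)=f\g_s(g)\a_s(a)\g_s(h)$, that lemma gives
\[
fE(gah)=\int_G f\g_s(g)\a_s(a)\g_s(h)\,ds,
\]
and, exactly as in the proof of \lemref{E=int}, properness guarantees that the integrand vanishes off the compact set
\[
K:=\{s\in G:(\supp f)\cap(\supp g)\cdot s\inv\not=\emptyset\}.
\]
Indeed $f\g_s(g)=0$ precisely when $s\notin K$. The first assertion $fE(gah)=fE(gah)k$ will therefore follow as soon as I produce a single $k\in C_c(X)$ that fixes the \emph{right-hand} factor $\g_s(h)$ for every $s\in K$, since for $s\notin K$ the whole integrand is already zero.

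To build such a $k$, I note that $\g_s(h)$ is the function $x\mapsto h(x\cdot s)$, so its support is $(\supp h)\cdot s\inv$. Hence
\[
\bigcup_{s\in K}\supp\g_s(h)\subset (\supp h)\cdot K\inv,
\]
and the right-hand side is compact, being the image of the compact set $(\supp h)\times K\inv$ under the continuous action map $X\times G\to X$. By Urysohn's lemma for locally compact Hausdorff spaces I choose $k\in C_c(X)$ with $0\le k\le 1$ and $k\equiv 1$ on $(\supp h)\cdot K\inv$. Then $\g_s(h)k=\g_s(h)$ for every $s\in K$, and consequently $f\a_s(gah)k=f\a_s(gah)$ for \emph{all} $s\in G$. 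Integrating over $G$ and pulling the bounded right-multiplication by $k$ through the $A$-valued integral (testing against $\o\in A^*$, just as in \lemref{E=int}) yields $fE(gah)k=fE(gah)$, as required.

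For the two inclusions I combine this with the fact, also from \lemref{E=int}, that $fE(gah)$ and $qE(gah)$ lie in $A$ rather than merely in $M(A)$. Given $f,g,h\in C_c(X)$ and $a\in A$, I pick $f_1\in C_c(X)$ with $f_1f=f$; then
\[
fE(gah)=f_1\bigl(fE(gah)\bigr)k
\]
exhibits $fE(gah)$ as an element of $C_c(X)\,A\,C_c(X)=A_0$, and linearity gives $C_c(X)E(A_0)\subset A_0$. For the second inclusion a typical product of generators is $(pcq)E(gah)$ with $p,q\in C_c(X)$ and $c\in A$; applying the first identity with $f$ replaced by $q$ gives $qE(gah)=qE(gah)k$, so
\[
(pcq)E(gah)=pc\bigl(qE(gah)\bigr)=p\,\bigl(c\,qE(gah)\bigr)\,k,
\]
where $c\,qE(gah)\in A$ and $p,k\in C_c(X)$. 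Thus this product lies in $A_0$, and linearity gives $A_0E(A_0)\subset A_0$.

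The one genuinely new point is the uniform support control in the second paragraph: the difficulty is not integrability — that is already handled by \lemref{E=int} — but the observation that, as $s$ ranges over the relevant compact set $K$, the supports of the right factors $\g_s(h)$ all sit inside one fixed compact subset of $X$, so that a single cutoff $k$ works simultaneously for every $s$. Once this is in hand, the remaining steps are routine bookkeeping using the ideal property $AM(A)\subset A$ and the commutation of right multiplication with the vector-valued integral.
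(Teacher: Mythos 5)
Your proof is correct and follows essentially the same route as the paper's: reduce to the $A$-valued integral of \lemref{E=int}, observe that the supports of $\{\g_s(h):s\in K\}$ lie in one compact set (your explicit $L=(\supp h)\cdot K\inv$ is exactly the set the paper invokes implicitly), choose $k\equiv 1$ there, and pull $k$ through the integral. Your second and third paragraphs merely flesh out, correctly, the regrouping details (the auxiliary $f_1$ with $f_1f=f$, and $qE(gah)\in A$) behind the paper's remark that ``the two inclusions follow easily.''
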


\begin{proof}
Lemma~\ref{E=int} implies that there is a compact set $K\subset G$ such that $f\a_s(gah)$ vanishes for $s$ outside a compact set $K\subset G$, so
\[
fE(gah)=\int_G f\a_s(gah)\,ds=\int_K f\a_s(ga)\g_s(h)\,ds,
\]
and then
the supports of the functions $\{\g_s(h):s\in K\}$ are all contained in a compact set $L\subset X$. We choose $k\in C_c(X)$ which is identically $1$ on $L$. Then $\g_s(h)=\g_s(h)k$ for all $s\in K$, and
\begin{align*}
fE(gah)&=\int_K f\g_s(g)\a_s(a)\g_s(h)k\,ds\\&=\Big(\int_K f\g_s(g)\a_s(a)\g_s(h)\,ds\Big)k\\&=fE(gah)k.
\end{align*}
The two inclusions follow easily.
\end{proof}

\begin{prop}\label{subalg}
$E(A_0)$ is a $*$-subalgebra of $M(A)$ which is contained in $M(A)^\a$.
\end{prop}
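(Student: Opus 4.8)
The plan is to check the three ingredients separately: that $E(A_0)$ is a linear subspace of $M(A)$, that it lands inside $M(A)^\a$, and that it is closed under the adjoint and under multiplication. First I would record that $A_0=C_c(X)AC_c(X)\subseteq C_c(X)M(A)C_c(X)\subseteq\mm$, so that $E$ is defined on all of $A_0$; since $E$ is linear and $A_0$ is a linear subspace, the set $E(A_0)=\{E(x):x\in A_0\}$ is automatically a linear subspace of $M(A)$, and in particular each of its elements has the form $E(x)$ for some $x\in A_0$.

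Next I would establish the fixed-point property, because it feeds into the multiplicativity argument. Fixing $x\in A_0$ and $t\in G$, I would test $\a_t(E(x))$ against the functionals $\omega\in A^*$, which separate the points of $M(A)$ via the inclusion $M(A)\subseteq A^{**}$. Using that $\a_t$ restricts to an automorphism of $A$, so that $\omega\circ\a_t\in A^*$, together with the defining formula \eqref{defE} and the left-invariance of Haar measure, I would compute
\[
\omega(\a_t(E(x)))=(\omega\circ\a_t)(E(x))=\int\omega(\a_{ts}(x))\,ds=\int\omega(\a_s(x))\,ds=\omega(E(x)),
\]
so that $\a_t(E(x))=E(x)$ and hence $E(A_0)\subseteq M(A)^\a$. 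The $*$-closure is then immediate: $A_0$ is self-adjoint, since $(fag)^*=g^*a^*f^*\in C_c(X)AC_c(X)$, and $E$ is $*$-preserving because it is positive, so $E(A_0)^*=E(A_0^*)=E(A_0)$.

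The main point, and the step carrying the real content, is closure under multiplication. Given $x,y\in A_0$, the element $E(y)$ lies in $M(A)^\a$ by the fixed-point property just proved, so \lemref{fixed} applies with $d=E(y)$ and gives $x\,E(y)\in\mm$ together with $E\bigl(x\,E(y)\bigr)=E(x)\,E(y)$. On the other hand, \lemref{key lemma} gives $A_0E(A_0)\subseteq A_0$, whence $x\,E(y)\in A_0$. Combining these, $E(x)\,E(y)=E\bigl(x\,E(y)\bigr)\in E(A_0)$, and since every element of $E(A_0)$ has the form $E(x)$ with $x\in A_0$, this shows $E(A_0)$ is closed under multiplication. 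The crux, and the obstacle I would flag, is precisely this interplay of the two lemmas: the fixed-point property is what allows the second factor $E(y)$ to be absorbed inside the map $E$ via \lemref{fixed}, and only then does \lemref{key lemma} guarantee that the argument $x\,E(y)$ stays inside $A_0$, keeping the product in the range of $E$.
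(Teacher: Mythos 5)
Your proof is correct and takes essentially the same route as the paper's: both get linearity, $*$-closure, and $\alpha$-invariance directly from \eqref{defE}, and both prove multiplicativity by using \lemref{fixed} with $d=E(y)$ to write $E(x)E(y)=E\bigl(xE(y)\bigr)$ and \lemref{key lemma} to keep $xE(y)$ inside $A_0$. The only cosmetic difference is that you cite the packaged inclusion $A_0E(A_0)\subset A_0$ where the paper re-derives it inline on generators with an auxiliary function $\ell\in C_c(X)$.
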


\begin{proof}
Since $E$ is linear and $*$-preserving, $E(A_0)$ is a $*$-closed linear subspace of $M(A)$. It is easy to check from the defining property \eqref{defE} that each $E(a)$ is invariant under $\a$. To see that
$E(A_0)$ is closed under multiplication, let $a,b\in A$ and $f,g,h,k\in C_c(X)$. Lemma~\ref{key lemma} gives $\ell\in C_c(X)$ such that $gE(hbk)=gE(hbk)\ell$, and then it follows from Lemma~\ref{fixed} that
\begin{align*}
E(fag)E(hbk)
=E(fagE(hbk))
=E(fagE(hbk)\ell).
\end{align*}
To finish off, observe that $E(fagE(hbk)\ell)$ is in $E(A_0)$ because $agE(hbk)$ is in $A\cdot C_c(X) \cdot M(A)\subset A$.
\end{proof}

\begin{defn}
For an object $(A,\a,\p)$ of $(C_0(X),\g)\dn \AA(G)$
we define $\fix A=\fix(A,\a,\phi)$ to be the norm closure of $E(A_0)$ in $M(A)$.
\end{defn}

By Proposition~\ref{subalg}, $\fix(A,\a,\phi)$ is a $C^*$-subalgebra of $M(A)^\a$.
Our next goal is to prove that $\fix$ extends to a functor from the comma category $(C_0(X),\gamma)\dn \AA(G)$ into $\CC$, and for this we need to know that morphisms respect the construction.

\begin{prop}
\label{fixonmorph}
Let $\s:(A,\a,\p)\to (B,\b,\psi)$ be a morphism in the comma category $(C_0(X),\g)\dn \AA(G)$.
Then $\s(A_0)E(B_0)\subset B_0$, and $\s$ restricts to a nondegenerate homomorphism $\s|$ of $\fix(A,\a,\phi)$ into $M(\fix(B,\b,\psi))$.
\end{prop}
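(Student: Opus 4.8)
My plan is to unpack what a comma-category morphism supplies, settle the inclusion $\s(A_0)E(B_0)\subset B_0$ by a direct computation, use an intertwining relation to push the expectation through $\s$, and then treat nondegeneracy, which is the real difficulty. A morphism $\s\colon(A,\a,\phi)\to(B,\b,\psi)$ is a nondegenerate $\a$--$\b$ equivariant homomorphism $\s\colon A\to M(B)$ with $\s\circ\phi=\psi$; extending $\s$ to $M(A)$ (and keeping the name $\s$) gives $\s(\phi(f))=\psi(f)$, so $\s(fag)=\psi(f)\s(a)\psi(g)\in C_c(X)M(B)C_c(X)\subset\mm$. For the first assertion I would take a spanning element $E(hbk)$ of $E(B_0)$ (writing $hbk=\psi(h)b\psi(k)$), apply \lemref{key lemma} in $B$ to get $\psi(g)E(hbk)=\sum_i\psi(h_i)b_i\psi(k_i)\in B_0$, and compute
\[
\s(fag)E(hbk)=\sum_i\psi(f)\bigl(\s(a)\psi(h_i)b_i\bigr)\psi(k_i),
\]
which lies in $B_0$ because each $\s(a)\psi(h_i)b_i\in B$; linearity then gives $\s(A_0)E(B_0)\subset B_0$.

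Next I would establish the intertwining $\s(E^\a(x))=E^\b(\s(x))$ whenever $x,\s(x)\in\mm$ (which holds for $x\in C_c(X)M(A)C_c(X)$): for $\omega\in B^*$, equivariance of $\s$ and \eqref{defE} give
\[
\omega(E^\b(\s(x)))=\int(\omega\circ\s)(\a_s(x))\,ds=(\omega\circ\s)(E^\a(x))=\omega(\s(E^\a(x))).
\]
Combining this with the right-hand version of \lemref{fixed} and the first assertion handles the containment in $M(\fix B)$: for generators,
\[
\s(E(fag))\,E(hbk)=E(\s(fag))\,E(hbk)=E\bigl(\s(fag)E(hbk)\bigr)\in E(B_0),
\]
since $\s(fag)E(hbk)\in B_0$. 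As $\s$ is $*$-preserving the symmetric product also lands in $\fix B$, so taking closures (multiplication is jointly norm-continuous) yields $\s(\fix A)\fix B\subset\fix B$ and $\fix B\,\s(\fix A)\subset\fix B$; hence $\s$ restricts to a homomorphism $\s|\colon\fix A\to M(\fix B)$.

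The hard part will be nondegeneracy, $\overline{\s|(\fix A)\fix B}=\fix B$, which I would prove by showing each generator $w=E(hbk)$ of $\fix B$ lies in the left-hand side. Using properness, choose $g\in C_c(X)$ with $E(g)\equiv 1$ on $\supp h$; since the intertwining applied to $\psi$ itself gives $E(\psi(g))=\psi(E(g))$ and $\psi(E(g))\psi(h)=\psi(h)$, \lemref{fixed} yields $E(\psi(g))\,w=w$. I would then approximate $\psi(g)$ from inside $\s(A_0)$: with $(e_\l)$ an approximate identity for $A$ and $g'\in C_c(X)$ equal to $1$ on $\supp g$, set $a_\l=\phi(g)e_\l\phi(g')\in A_0$, so $\s(a_\l)=\psi(g)\s(e_\l)\psi(g')$ and $E(a_\l)\in\fix A$. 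By \lemref{fixed} and \lemref{key lemma},
\[
\s|(E(a_\l))\,w=E\bigl(\s(a_\l)E(hbk)\bigr)=E\Bigl(\psi(g)\,\s(e_\l)\,\psi(g')E(hbk)\Bigr),
\]
and writing $\psi(g')E(hbk)=\sum_i\psi(h_i)b_i\psi(k_i)$ and using $\s(e_\l)\to 1$ strictly, the argument of $E$ converges in norm to $\psi(g)E(hbk)$ through the \emph{fixed} outer functions $\psi(g),\psi(k_i)$. I expect the main obstacle to be precisely this limit: $E$ is only densely defined and is not norm-continuous on $\mm$, so one must keep the outer functions fixed and invoke the continuity of $c\mapsto E(\psi(g)c\psi(k_i))$ from \cite[Corollary~3.6(3)]{Q:landstad}; granting this, $\s|(E(a_\l))\,w\to E(\psi(g)E(hbk))=E(\psi(g))\,w=w$, so $w\in\overline{\s|(\fix A)\fix B}$ and $\s|$ is nondegenerate.
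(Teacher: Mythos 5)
Your proposal is correct and takes essentially the same route as the paper: the first inclusion via \lemref{key lemma}, the multiplier property via the intertwining $\s\circ E^\a=E^\b\circ\s$ (which the paper imports from \cite[Proposition~1.4]{qr:induced} rather than proving by hand) combined with \lemref{fixed}, and nondegeneracy from \lemref{Eg=1}, the nondegeneracy of $\s$, and the norm-continuity of the compressed maps $c\mapsto E(fcg)$ for fixed $f,g\in\mm$. The paper packages the nondegeneracy limit slightly differently --- a single approximant $a$ with $gE(hbk)\approx\s(a)gE(hbk)$ inside the fixed sandwich $E(f\,\cdot\,\ell)$, where $fg=g$ and $gE(hbk)=gE(hbk)\ell$, instead of your approximate-identity net and finite decomposition of $\psi(g')E(hbk)$ --- but the ingredients and the crucial observation (that $E$ is only continuous after compression by fixed elements of $\mm$) are identical.
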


The proof of nondegeneracy is surprisingly subtle, and depends on some properties of the averaging process for the system $(C_0(X),\g)$.
For $f\in C_c(X)$, $E^\g(f)$ is multiplication by the function $x\mapsto \int f(x\cdot s)\,ds$. The expectation $E^\g$ is related to $E^\alpha$ by \cite[Proposition~1.4]{qr:induced}, which implies that $\p$ maps $C_c(X)$ into the subalgebra $\mm\subset M(A)$ and satisfies $E^\a\circ\phi=\phi\circ E^\g$. We need the following standard lemma.

\begin{lem}
\label{Eg=1}
For each $h\in C_c(X)$ there exists $g\in C_c(X)$ such that $E(g)h=h$.
\end{lem}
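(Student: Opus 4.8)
The plan is to reduce the statement to an assertion about the orbit average on the quotient $X/G$. Recall from the computation preceding the lemma that for a positive $f\in C_c(X)$ the multiplier $E(f)=E^\g(f)$ is pointwise multiplication by $\Phi^f(x):=\int_G f(x\cdot t)\,dt$. Hence for a nonnegative $g\in C_c(X)$ the desired identity $E(g)h=\Phi^g h=h$ is equivalent to requiring that $\Phi^g\equiv 1$ on the compact set $K:=\supp h$. So it suffices to produce a nonnegative $g\in C_c(X)$ whose orbit average takes the value $1$ at every point of $K$.

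First I would fix any $g_0\in C_c(X)$ with $0\le g_0\le 1$ and $g_0\equiv 1$ on $K$, which exists by local compactness of $X$. Because the integrand $t\mapsto g_0(x\cdot t)$ is continuous, nonnegative, and strictly positive at $t=e$ for $x\in K$, the average $\Phi^{g_0}$ is strictly positive on $K$. Since $\Phi^{g_0}$ is continuous and constant on $G$-orbits, and the quotient map $q\colon X\to X/G$ is an open surjection onto the locally compact Hausdorff space $X/G$, the function $\Phi^{g_0}$ descends to a continuous function $\overline\Phi$ on $X/G$ that is strictly positive on the compact set $q(K)$.

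Next I would correct the average by a $G$-invariant factor. Choosing a relatively compact open set $U\supset q(K)$ on which $\overline\Phi>0$ and a Urysohn function $\chi\in C_c(X/G)$ with $\chi\equiv 1$ on $q(K)$ and $\supp\chi\subset U$, the function $\psi:=\chi/\overline\Phi$ lies in $C_c(X/G)$, is nonnegative, and agrees with $1/\overline\Phi$ on $q(K)$. I then set $g:=g_0\cdot(\psi\circ q)$, which is a nonnegative element of $C_c(X)$ since $g_0$ has compact support. As $q$ is $G$-invariant, so is $\psi\circ q$, and it therefore pulls out of the orbit integral:
\[
\Phi^g(x)=\psi(q(x))\,\Phi^{g_0}(x)=\psi(q(x))\,\overline\Phi(q(x))=\chi(q(x))\quad\text{whenever }\overline\Phi(q(x))\neq0.
\]
For $x\in K$ this equals $1$, so $\Phi^g\equiv1$ on $K$ and $E(g)h=h$, as required.

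The only delicate point is the rescaling: one cannot simply normalize $\Phi^{g_0}$ by a scalar, since its values over distinct orbits need not be comparable. I therefore expect the real content to be the descent to $X/G$ and the construction of the $G$-invariant multiplier $\psi\circ q$ that repairs the average orbit-by-orbit; the remaining ingredients are routine applications of Urysohn's lemma and Fubini's theorem.
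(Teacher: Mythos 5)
Your proof is correct and takes essentially the same route as the paper's: there, one chooses $g_1\in C_c(X,[0,\infty))$ with $g_1>0$ on $K=\supp h$, notes that $E(g_1)>0$ on $K\cdot G$, and multiplies by a $G$-invariant function $g_2\in C_c(X/G)$ equal to $1/E(g_1)$ on $K\cdot G$ --- your $g_0$ and $\psi=\chi/\overline{\Phi}$ play exactly the roles of $g_1$ and $g_2$. The only difference is that you spell out the Urysohn-type construction of the invariant correcting factor on $X/G$, which the paper simply asserts to exist.
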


\begin{proof}
Let $K=\supp h$, and choose $g_1\in C_c(X,[0,\infty))$ such that $g_1>0$ on $K$.
Then $E(g_1)>0$ on $K\cdot G$, so there exists $g_2\in C_c(X/G)$ such that $g_2=1/E(g_1)$ on $K\cdot G$.
Then $g:=g_1g_2$ satisfies
\[
E(g)(x\cdot G) =\int g_1(x\cdot s)g_2(x\cdot G)\,ds=E(g_1)(x\cdot G)g_2(x\cdot G),
\]
which is $1$ whenever $x\in K$.
\end{proof}

\begin{proof}[Proof of Proposition~\ref{fixonmorph}]
Let $f,g\in C_c(X)$, $a\in A$, and $b\in B_0$. By Lemma~\ref{key lemma}, there exists $h\in C_c(X)$ such that $gE(b)=gE(b)h$. Then
\begin{align*}
\s(fag)E(b)=f\s(a)gE(b)=f\s(a)gE(b)h.
\end{align*}
Since $C_c(X) E(B_0)\subset B_0$ by Lemma~\ref{key lemma}, we have $gE(b)\in B_0$, $\s(a)gE(b)\in B$, and $f\s(a)gE(b)h\in B_0$, justifying the first assertion.

Next we claim that $\s(E(A_0))E(B_0)\subset E(B_0)$.
Indeed, for $a\in A_0$ and $b\in B_0$,
\cite[Proposition~1.4]{qr:induced} and
Lemma~\ref{fixed} give
\begin{align*}
\s(E(a))E(b)=E(\s(a))E(b)=E(\s(a)E(b)),
\end{align*}
which belongs to $E(B_0)$ by the first assertion. Taking adjoints gives $E(B_0)\s(E(A_0))\subset E(B_0)$. Since $E(B_0)$ is dense in $\fix(B,\b,\psi)$, it follows that $E(a)$ multiplies $\overline{E(B_0)}=\fix(B,\b,\psi)$, and $\sigma$ maps $E(A_0)$ into $M(\fix(B,\b,\psi))$; continuity now implies that $\s$ maps $\fix(A,\a,\phi)$ into $M(\fix(B,\b,\psi))$.

To establish nondegeneracy, let $h,k\in C_c(X)$ and $b\in B$.
By Lemma~\ref{Eg=1}, there exists $g\in C_c(X)$ such that $E(g)h=h$; choose $f\in C_c(X)$ such that $fg=g$. Lemma~\ref{key lemma} implies that there exists $\ell\in C_c(X)$ such that $gE(hbk)=gE(hbk)\ell$. Then two applications of Lemma~\ref{fixed} show that
\begin{align*}
E(hbk)
&=E(E(fg)hbk)
=E(fg)E(hbk)
\\&=E(fgE(hbk))
=E(fgE(hbk)\ell).
\end{align*}
Lemma~\ref{key lemma} implies that $gE(hbk)$ belongs to $B$, so the nondegeneracy of $\s$ implies that there exists $a\in A$ such that $gE(hbk)\approx \s(a)gE(hbk)$ in norm in $B$.
Thus, by the continuity of $c\mapsto E(fc\ell)$, we have a norm approximation
\begin{align*}
E(hbk)&\approx E(f\s(a)gE(hbk)\ell)\\
&=E(f\s(a)gE(hbk))\\
&=\s(E(fag))E(hbk),
\end{align*}
where at the last stage we used Lemma~\ref{fixed} again.
Since $E(B_0)$ is dense in $\fix(B,\b,\psi)$, this approximation implies that $\s|$ is nondegenerate.
\end{proof}

\begin{cor}
\label{fix functor}
Suppose that $G$ acts properly on $X$, and $\gamma$ is the corresponding action of $G$ on $C_0(X)$.  Then the assignments
\[
(A,\a,\p)\mapsto \fix(A,\a,\p)\and \s\mapsto \s|
\]
give a functor from $(C_0(X),\g)\dn \AA(G)$ to $\CC$.
\end{cor}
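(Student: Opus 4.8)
The plan is to check the two functor axioms, since the rest of the data is already assembled. The object assignment lands in $\CC$ by Proposition~\ref{subalg}, which shows that each $\fix(A,\a,\p)$ is a $C^*$-algebra; and Proposition~\ref{fixonmorph} shows that a morphism $\s$ of the comma category gives a nondegenerate homomorphism $\s|\colon\fix(A,\a,\p)\to M(\fix(B,\b,\psi))$, which is exactly a morphism $\fix(A,\a,\p)\to\fix(B,\b,\psi)$ in $\CC$. So it remains to verify that $\s\mapsto\s|$ preserves identities and composition.

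For identities, the identity morphism of $(A,\a,\p)$ in the comma category is $\id_A$, regarded as the inclusion $A\to M(A)$ in $\CC$. Tracing through the construction in Proposition~\ref{fixonmorph} with $\s=\id_A$, the map $\id_A|$ is determined on the dense subalgebra $E(A_0)$ by $E(a)\mapsto E(\id_A(a))=E(a)$, so $\id_A|$ fixes every element of $E(A_0)$ and hence, by continuity, is the inclusion $\fix(A,\a,\p)\hookrightarrow M(\fix(A,\a,\p))$. This is precisely $\id_{\fix(A,\a,\p)}$ in $\CC$.

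The substantive point is preservation of composition. Given $\s\colon(A,\a,\p)\to(B,\b,\psi)$ and $\tau\colon(B,\b,\psi)\to(C,\d,\chi)$, both $(\tau\circ\s)|$ and $\tau|\circ\s|$ are nondegenerate homomorphisms of $\fix(A,\a,\p)$ into $M(\fix(C,\d,\chi))$, where the composite on the right is formed in $\CC$ using the canonical extension of $\tau|$ to $M(\fix(B,\b,\psi))$. To see they agree it suffices, by continuity and density of $E(A_0)$, to check equality on each $E(a)$ with $a\in A_0$, and since these are multipliers it is enough to test the resulting elements of $M(\fix(C,\d,\chi))$ against the dense subalgebra $E(C_0)$ of $\fix(C,\d,\chi)$. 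The key input, already isolated in the proof of Proposition~\ref{fixonmorph}, is the formula $\s|(E^\a(a))=E^\b(\s(a))$ for $a\in A_0$, where $\s(a)\in C_c(X)M(B)C_c(X)\subset\mm$ and $E^\b(\s(a))$ is read as a multiplier of $\fix(B,\b,\psi)$ (and similarly for $\tau$ and for $\tau\circ\s$). Combining this with Lemma~\ref{fixed} to move $E$ past $\a$-invariant elements, the comma-category relation $\tau\circ\psi=\chi$, and the intertwining $E^\d\circ\tau=\tau\circ E^\b$ for the equivariant morphism $\tau$ (as in the proof of Proposition~\ref{fixonmorph}, via \cite[Proposition~1.4]{qr:induced}), one computes
\[
\tau|\bigl(\s|(E^\a(a))\bigr)=\tau|\bigl(E^\b(\s(a))\bigr)=E^\d\bigl(\tau(\s(a))\bigr)=E^\d\bigl((\tau\circ\s)(a)\bigr)=(\tau\circ\s)|(E^\a(a)),
\]
which is the desired identity on the dense subalgebra $E(A_0)$.

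The main obstacle will be the bookkeeping with multiplier extensions: the elements $\s|(E^\a(a))=E^\b(\s(a))$ lie in $M(\fix(B,\b,\psi))$ rather than in $E(B_0)$, so the formula $\tau|(E^\b(b))=E^\d(\tau(b))$, immediate for $b\in B_0$, must be pushed to these multipliers before the displayed chain is legitimate. I would justify this exactly as in the nondegeneracy argument of Proposition~\ref{fixonmorph}: pair against $E(C_0)$, and use the nondegeneracy of $\tau|$ together with the norm-continuity of the maps $c\mapsto E(fc\ell)$ to reduce everything to the level of $B_0$ and $C_0$, where Lemma~\ref{key lemma} and Lemma~\ref{fixed} apply directly. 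Once the extensions are handled with this care, the verification is purely formal, and the three assertions together show that $(A,\a,\p)\mapsto\fix(A,\a,\p)$, $\s\mapsto\s|$ is a functor.
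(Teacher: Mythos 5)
Your argument is correct, but it takes a far longer route than the paper, whose entire proof is the one-line remark that $\s\mapsto\s|$ obviously respects identities and compositions. The reason this is ``obvious'' is structural: Proposition~\ref{fixonmorph} defines $\s|$ as the restriction to $\fix(A,\a,\p)\subset M(A)$ of the canonical strictly continuous extension $\bar\s\colon M(A)\to M(B)$, and canonical extensions compose, so $(\tau\circ\s)|$ and $\tau|\circ\s|$ are both restrictions of $\bar\tau\circ\bar\s$. The only point needing any argument is the one you correctly flag: composition in $\CC$ uses the extension $\overline{\tau|}$ to $M(\fix(B,\b,\psi))$, so one must check that $\overline{\tau|}$ agrees with $\bar\tau$ on the multipliers $\s|(E(a))$. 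But that is pure multiplier bookkeeping, needing none of the expectation machinery: for $m=\bar\s(E(a))$ and $b\in\fix(B,\b,\psi)$ one has
\[
\overline{\tau|}(m)\,\tau|(b)=\tau|(mb)=\bar\tau(mb)=\bar\tau(m)\,\tau|(b),
\]
and since $\tau|$ is nondegenerate (Proposition~\ref{fixonmorph}) the products $\tau|(b)c$ span a dense subspace of $\fix(C,\d,\chi)$, whence $\overline{\tau|}(m)=\bar\tau(m)$ as multipliers of $\fix(C,\d,\chi)$. Your route instead verifies the identity on the dense subalgebra $E(A_0)$ via the intertwining $E\circ\s=\s\circ E$ of \cite[Proposition~1.4]{qr:induced}; that does work, but its delicate middle step --- extending $\tau|(E^\b(b))=E^\d(\tau(b))$ from $b\in B_0$ to the multipliers $\s(a)\in C_c(X)M(B)C_c(X)$, which you only sketch --- is exactly the same multiplier bookkeeping in disguise, padded with extra appeals to Lemmas~\ref{fixed} and~\ref{key lemma}. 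In short: your approach buys an explicit formula for the composite on $E(A_0)$, at the cost of re-importing the analysis; the paper's (implicit) approach buys the realization that functoriality is purely formal, with all the analysis already quarantined in Proposition~\ref{fixonmorph}.
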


\begin{proof}
It is obvious that $\s\mapsto \s|$ respects identity morphisms and compositions.
\end{proof}

\section{Naturality of Rieffel's Morita equivalence}
\label{bimodules}

We again fix a proper right action of $G$ on a locally compact space $X$,
let $\g$ be the associated action on $C_0(X)$,
and consider a system $(A,\a,\p)$ in the comma category $(C_0(X),\g)\dn \AA(G)$. The next proposition is a new version of Theorem~5.7 of \cite{integrable} in which we use our averaging process $E$ in place of Rieffel's operator-valued weight $\psi_\alpha$, and thereby identify Rieffel's generalized fixed-point algebra as our $\fix(A,\a,\phi)$.

\begin{prop}\label{is-proper}
For any object $(A,\alpha,\phi)$ in $(C_0(X),\gamma)\dn\AA(G)$,
$\alpha$ is proper with respect to $A_0 = C_c(X)AC_c(X)$
in the sense of \cite[Definition~1.2]{proper}\textup;
the generalized fixed-point algebra $A^\alpha$ is $\fix(A,\alpha,\phi)$,
and the right inner product on $A_0$ is given by
\[\<a,b\>_R=E(a^*b).\]
\end{prop}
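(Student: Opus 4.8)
My plan is to verify Rieffel's \cite[Definition~1.2]{proper} directly, carrying the candidate formula $\<a,b\>_R:=E(a^*b)$ along from the start, so that the identification of $A^\a$ with $\fix(A,\a,\p)$ and the inner-product formula emerge together at the end. The preliminary observation is that $A_0=C_c(X)AC_c(X)$ is a dense, $\a$-invariant $*$-subalgebra of $A$: it is $\a$-invariant because $\g_s$ preserves $C_c(X)$, and it is closed under adjoints and products since $(fag)^*=\bar g\,a^*\bar f$ and $(fag)(hbk)=f(a\,\p(gh)\,b)k$ both lie in $A_0$. Rieffel's notion of properness then asks, for $a,b\in A_0$, (a) that $s\mapsto a\,\a_s(b^*)$ be integrable, so as to define a $D$-valued inner product ${}_D\<a,b\>$ in $M(A\times_{\a,r}G)$; and (b) that there be an element $\<a,b\>_R\in M(A)^\a$ characterized by the compatibility relation ${}_D\<c,a\>\cdot b=c\,\<a,b\>_R$. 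Verifying (b) with $\<a,b\>_R=E(a^*b)$ simultaneously establishes properness and the inner-product formula.

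Condition (a) reduces at once to the support argument of \lemref{E=int}: writing $a=f_1a'g_1$ and $b=f_2b'g_2$, the factor $g_1\g_s(\bar g_2)$ occurring in $a\,\a_s(b^*)$ vanishes unless $s$ lies in $\{s:(\supp g_1)\cap(\supp g_2)\cdot s^{-1}\neq\emptyset\}$, which is relatively compact because $G$ acts properly on $X$. Hence $s\mapsto a\,\a_s(b^*)$ belongs to $C_c(G,A)$ and defines an element of $A\times_{\a,r}G\subset M(A\times_{\a,r}G)$. On the other side, $a^*b\in A_0\subset\mm$, so $E(a^*b)$ is defined and lies in $M(A)^\a$ by \propref{subalg}; it is $*$-compatible since $E(a^*b)^*=E(b^*a)$, and positive on the diagonal because $\o(E(a^*a))=\int\o(\a_s(a^*a))\,ds\geq0$ for positive $\o$, while \lemref{key lemma} gives $A_0\,E(a^*b)\subset A_0$.

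The heart of the matter is the compatibility relation, which I would verify in the form ${}_D\<a,b\>\cdot c=a\,E(b^*c)$ for $a,b,c\in A_0$, the module action of $C_c(G,A)$ on $c$ being $z\cdot c=\int_G z(s)\,\a_s(c)\,ds$. Since ${}_D\<a,b\>$ has compact support by (a), the function $s\mapsto {}_D\<a,b\>(s)\,\a_s(c)=a\,\a_s(b^*c)$ is in $C_c(G,A)$, so the left-hand side is the honest norm-convergent integral $\int_G a\,\a_s(b^*c)\,ds$. I would then run the functional-pulling argument behind \lemref{E=int}: for $\o\in A^*$ set $\o_a(x)=\o(ax)$; because $a\,\a_s(b^*)$, and hence $a\,\a_s(b^*c)$, vanishes off the support of ${}_D\<a,b\>$, the defining property \eqref{defE} applied to $b^*c\in\mm$ gives
\[
\o\Big(\int_G a\,\a_s(b^*c)\,ds\Big)=\int_G\o_a(\a_s(b^*c))\,ds=\o_a(E(b^*c))=\o(a\,E(b^*c)).
\]
As $\o$ is arbitrary, $\int_G a\,\a_s(b^*c)\,ds=a\,E(b^*c)$, which is exactly the compatibility relation with $\<b,c\>_R=E(b^*c)$. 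This is the one place properness is genuinely decisive, since it is compact support that reconciles the strict ``integral'' defining $E$ with the norm-convergent module action.

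It remains to identify $A^\a=\overline{\spn}\{\<a,b\>_R:a,b\in A_0\}=\overline{\spn}\{E(a^*b):a,b\in A_0\}$ with $\fix(A,\a,\p)=\overline{E(A_0)}$. The inclusion $A^\a\subset\fix(A,\a,\p)$ is immediate, as $a^*b\in A_0$. For the reverse inclusion I must realize each generator $E(fag)$ as a norm-limit of inner products $\<x,y\>_R$ with $x,y\in A_0$: approximating $a$ in $A$ by sums $\sum u_i^*v_i$ and invoking nondegeneracy of $\p$ to push $C_c(X)$-factors inward (so that $u_i\bar f$ and $v_ig$ are themselves approximated inside $A_0$ with the outer factors $f,g$ held fixed), the piecewise continuity of $a'\mapsto E(fa'g)$ delivers $E(fag)\in A^\a$. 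I expect this density step to be the fussiest part of the whole argument: unlike (a) and the compatibility relation, which fall cleanly out of the lemmas of \secref{fix}, it requires careful bookkeeping of the two-sided $C_c(X)$-factors precisely because $E$ is not norm-continuous on all of $A_0$.
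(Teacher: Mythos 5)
Your proposal is correct and follows essentially the same route as the paper's proof: it verifies Rieffel's two conditions using the compact-support argument and the functional-pulling identity of \lemref{E=int}, the invariance and module properties from \propref{subalg} and \lemref{key lemma}, and then identifies $A^\a$ with $\fix(A,\a,\p)$ via the trivial inclusion plus an approximation of $a$ through nondegeneracy of $\p$ combined with the norm continuity of $c\mapsto E(fcg)$. The only differences are cosmetic (you phrase Rieffel's second condition as a compatibility relation between the two inner products and re-derive the integral identity inline rather than citing it), so there is nothing to add.
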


\begin{proof}
The nondegeneracy of $\phi$ implies that the $*$-subalgebra $A_0$ is dense in $A$, and it is obviously $\alpha$-invariant. For $a,b\in A_0$
the map $s\mapsto a\a_s(b^*)$ is in $C_c(G,A)$, which implies that both $s\mapsto a\a_s(b^*)$ and $s\mapsto a\a_s(b^*)\D(s)^{-1/2}$
are in $L^1(G,A)$, as required in \cite[Definition~1.2(1)]{proper}.

Now fix $a,b,c\in A_0$. We observed in Proposition~\ref{subalg} that $E(a^*b)$ is in $M(A)^\a$, and the inclusion $A_0E(A_0)\subset A_0$ in Lemma~\ref{key lemma} implies that $E(a^*b)$ multiplies $A_0$. Lemma~\ref{E=int} implies that the function
$s\mapsto c\a_s(a^*b)$
is in $C_c(G,A)$ and
\[cE(a^*b)=\int c\a_s(a^*b)\,ds.\]
So $\langle a,b\rangle_R:=E(a^*b)$ has the properties required in \cite[Definition~1.2(2)]{proper}. Thus $\a$ is proper.

To identify $A^\a$, we note first that
\[
A^\alpha = \clspn\{ E(a^*b) \mid a,b\in A_0 \}\subset \overline{E(A_0)}=\fix(A,\alpha,\phi).
\]
For the reverse inclusion, let $fag\in A_0$. By the nondegeneracy of $\phi$, we can approximate
$a\approx a_1^* hk a_2$,
and then by continuity of $a\mapsto E(fag)$ we have $E(fag) \approx E(fa_1^*hka_2g)$, and $A^\alpha \supset \fix(A,\alpha,\phi)$.
\end{proof}

In \cite{integrable}, Rieffel asserted that if the proper action of $G$ on $X$ is free, then  $\a$ is saturated with respect to $A_0$ in the sense of \cite[Definition~1.6]{proper}, and a proof of this assertion was provided in \cite[Lemma~4.1]{HRW05}. Thus Corollary~1.7 of
\cite{proper} implies that the completion $Z(A)$ of $A_0$ in the norm defined by $\langle \cdot,\cdot\rangle_R$ is an $(A\times_{\a,r} G)$--$\fix(A,\a,\phi)$ imprimitivity bimodule. The left module action of $C_c(G,A)\subset A\times_{\a,r} G$ on $A_0\subset Z(A)$ is given by
\[f\cdot a=\int_G f(s)\a_s(a)\D(s)^{1/2}\,ds,\]
and the left inner product ${}_L\<a,b\>$ for $a,b\in A_0$ is the element of $C_c(G,A)$ defined by
\[{}_L\<a,b\>(s)=a\a_s(b^*)\D(s)^{-1/2}.\]

\begin{thm}\label{XAnatequiv}
Suppose that $G$ acts freely and properly on a locally compact space $X$, and $\s:(A,\a,\p)\to (B,\b,\psi)$ is a morphism in $(C_0(X),\g)\dn \AA(G)$. Then the diagram
\begin{equation}\label{natofRieffel}
\xymatrix@C+30pt{
A\times_{\a,r} G \ar[r]^-{Z(A)} \ar[d]_{\s\times_r G}
&\fix(A,\a,\phi) \ar[d]^{\s|}
\\
B\times_{\b,r} G \ar[r]_-{Z(B)}
&\fix(B,\b,\psi)
}
\end{equation}
of $C^*$-correspondences
commutes in the sense that
there is an isomorphism
\[\P:Z(A)\otimes_{\fix(A,\a,\phi)} \fix(B,\b,\psi)\iso Z(B)\]
of $(A\times_{\a,r} G) - \fix(B,\b,\psi)$ correspondences
such that
\[\P(a\otimes E(b))=\s(a)E(b)
\righttext{for $a\in A_0$ and $b\in B_0$.}\]
\end{thm}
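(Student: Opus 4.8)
The plan is to define $\P$ on generators and show that it is an isometric, surjective morphism of correspondences. Since $A_0\subset Z(A)$ and $E(B_0)$ is dense in $\fix B$, the elements $a\otimes E(b)$ with $a\in A_0$ and $b\in B_0$ span a dense subspace of $Z(A)\otimes_{\fix A}\fix B$, so I set $\P(a\otimes E(b)):=\s(a)E(b)$, which lies in $B_0\subset Z(B)$ by \propref{fixonmorph}. First I would check that $\P$ respects the balancing over $\fix A$: for $c\in A_0$ we have $a\cdot E(c)=aE(c)\in A_0$ by \lemref{key lemma}, and $\P\bigl((aE(c))\otimes E(b)\bigr)=\s(a)\s(E(c))E(b)=\P\bigl(a\otimes(E(c)\cdot E(b))\bigr)$, using $\s(E(c))=E(\s(c))$ from \propref{fixonmorph}; hence $\P$ is well defined on the balanced algebraic tensor product. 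The heart of this first stage is that $\P$ preserves the $\fix B$-valued inner products. On the one hand, the inner product on the internal tensor product is
\[
\<a\otimes E(b),a'\otimes E(b')\>=E(b)^*\,\s\bigl(\<a,a'\>_R\bigr)\,E(b')=E(b)^*\,\s\bigl(E(a^*a')\bigr)\,E(b');
\]
on the other hand, since $\s$ is a $*$-homomorphism and $E(b),E(b')\in\fix B\subset M(B)^\b$ are $\b$-fixed, \lemref{fixed} lets me pull these factors outside $E$ to obtain
\[
\<\s(a)E(b),\s(a')E(b')\>_R=E\bigl(E(b)^*\s(a^*a')E(b')\bigr)=E(b)^*\,E\bigl(\s(a^*a')\bigr)\,E(b').
\]
The two agree because $\s(E(a^*a'))=E(\s(a^*a'))$ by \propref{fixonmorph}, so $\P$ extends to an isometry of $Z(A)\otimes_{\fix A}\fix B$ into $Z(B)$.

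Right $\fix B$-linearity of $\P$ is immediate from the definition. For left linearity I would verify that $\P$ intertwines the action of $A\times_{\a,r}G$ on the tensor product (through $Z(A)$) with its action on $Z(B)$ through $\s\times_r G$. Using the formula $f\cdot a=\int_G f(s)\a_s(a)\D(s)^{1/2}\,ds$ for $f\in C_c(G,A)$, the equivariance $\s\circ\a_s=\b_s\circ\s$, and the fact that $E(b)$ is $\b$-fixed, both $\P\bigl((f\cdot a)\otimes E(b)\bigr)$ and $(\s\times_r G)(f)\cdot\bigl(\s(a)E(b)\bigr)$ reduce to $\int_G\s(f(s))\b_s(\s(a))\D(s)^{1/2}\,ds\,E(b)$; here I invoke the naturality of the reduced crossed product, namely that $\s\times_r G$ is the morphism induced by $\s$ and that its left action on $Z(B)$ has this integral form. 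This makes $\P$ a morphism of $(A\times_{\a,r}G)$--$\fix B$ correspondences, so only surjectivity remains.

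The main obstacle is surjectivity, and I do not expect to obtain it by approximating a general element of $B_0$ directly by elements $\s(a)E(b)$, since $\s(A_0)$ need not be dense in $B$ (only $\s(A)B$ is). Instead I would use that $Z(B)$ is a $(B\times_{\b,r}G)$--$\fix B$ imprimitivity bimodule, together with the criterion that a closed right $\fix B$-submodule $W\subseteq Z(B)$ with ${}_L\<W,W\>$ dense in $B\times_{\b,r}G$ must equal $Z(B)$. To see the criterion, write an approximate identity of $B\times_{\b,r}G$ as sums $e_\lambda=\sum_i{}_L\<w_i,w_i'\>$ with $w_i,w_i'\in W$; the imprimitivity relation ${}_L\<x,y\>z=x\<y,z\>_R$ then gives $e_\lambda z=\sum_i w_i\<w_i',z\>_R\in W$ and $e_\lambda z\to z$ for every $z\in Z(B)$. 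Since $\P$ is isometric, its range $W=\overline{\s(A_0)\fix B}$ (closure in $Z(B)$) is a closed right submodule, so it suffices to prove that ${}_L\<W,W\>$ is dense in $B\times_{\b,r}G$. A direct computation gives
\[
{}_L\<\s(a)E(b),\s(a')E(b')\>(s)=\s(a)\,E(b)E(b')^*\,\b_s\bigl(\s(a'^*)\bigr)\,\D(s)^{-1/2},
\]
and I expect the hardest step of the whole proof to be showing that, as $a,a'$ range over $A_0$ and $b,b'$ over $B_0$, the closed span of these functions exhausts ${}_L\<B_0,B_0\>$, which is dense in $B\times_{\b,r}G$ by the fullness of $Z(B)$. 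This is exactly where the delicate averaging arguments behind \propref{fixonmorph} re-enter: nondegeneracy of $\s$ together with \lemref{Eg=1} and \lemref{key lemma} should let me absorb the factors $E(b)E(b')^*$ into an approximate identity for $\fix B$ while converting $\s(a)$ and $\s(a')$ back into arbitrary elements of $B$, thereby recovering all of ${}_L\<B_0,B_0\>$ and completing the proof that $\P$ is an isomorphism.
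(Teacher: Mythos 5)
Your construction of $\P$ and the first two stages of your argument are correct and coincide with the paper's own proof: the same definition on elementary tensors, the same inner-product computation using \lemref{fixed} to pull the invariant factors $E(b)^*$ and $E(b')$ outside the average together with the identity $\s\circ E=E\circ\s$, and the same verification of the left $C_c(G,A)$-action using the integral formula, equivariance of $\s$, and $\b$-invariance of $E(b)$. The divergence, and the genuine gap, is in the surjectivity step.

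Your imprimitivity-bimodule criterion is itself valid, but the reduction it effects is circular: by that same criterion in one direction, and by continuity of ${}_L\<\cdot,\cdot\>$ plus fullness in the other, density of ${}_L\<W,W\>$ in $B\times_{\b,r}G$ is \emph{equivalent} to density of $W=\clspn\s(A_0)E(B_0)$ in $Z(B)$ --- which is exactly the statement you set out to avoid proving directly, and it is the one step you leave as a hope (``should let me absorb\dots''). Moreover, the tools you name cannot deliver it. Nondegeneracy of $\s$ produces approximations $b\approx\sum_i\s(a_i)b_i$ in the $B$-norm, but (i) the $Z(B)$-norm $\|E(z^*z)\|^{1/2}$ is not dominated by the $B$-norm ($E$ is unbounded on $B$), so $B$-norm approximations pass to $Z(B)$ only inside expressions such as $f(\cdot)gE(b)$ where the norm-continuity of $c\mapsto E(fcg)$ applies; and (ii), more fatally, the residual factors $b_i$ land \emph{between} $\s(a_i)$ and the average, producing elements $\s(a_i)b_igE(b)$ rather than elements of $\s(A_0)E(B_0)$, and iterating the approximation never removes this stray factor of $B$. \lemref{Eg=1} and \lemref{key lemma} are what make the fixed-point-algebra statement in \propref{fixonmorph} work; they do not produce the module density. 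The missing idea is the paper's \lemref{dense}: freeness is used a \emph{second} time, through the saturation of the system $(C_0(X),\g)$ itself, which makes the inner products ${}_L\<f,g\>$ for $f,g\in C_c(X)\subset Z(C_0(X))$ span a dense subspace of $C_0(X)\times_{\g,r}G$; since $\psi\times_r G$ makes this algebra act nondegenerately on $Z(B)$, and since $fE(b)={}_L\<f,\bar g\>b$ whenever $b\in B_0$ is factored as $b=gb$, one concludes that $C_c(X)E(B_0)$ is dense in $Z(B)$. Only then does nondegeneracy of $\s$ enter, inside the product $\s(fag)E(b)=f\s(a)gE(b)$, yielding \corref{dense corollary}: $\s(A_0)E(B_0)$ is dense in $Z(B)$. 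With that density in hand your detour through the imprimitivity criterion is unnecessary: $\P$ is isometric with dense range, hence surjective.
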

\begin{rem}\label{intoBEcat}
By composing with the functor $\phi\mapsto [\phi]$ from $\CC$ to $\CC_{\aug}$
described in Remark~\ref{aug-rem}, we may view both $\fix$ and
the reduced-crossed-product functor
$\RCP:(A,\a,\p)\mapsto A\times_{\a,r} G$ as
taking values in
the augmented category $\CC_{\aug}$ of~\cite{enchilada}.
Theorem~\ref{XAnatequiv} then says that diagram~\eqref{natofRieffel}
commutes in $\CC_{\aug}$, so
the assignment $(A,\alpha,\phi)\mapsto Z(A)$ implements a natural isomorphism between $\RCP$ and $\fix$.
\end{rem}

For the proof of Theorem~\ref{XAnatequiv}, we need a lemma.

\begin{lem}
\label{dense}
$C_c(X) E(A_0)$ is dense in $Z(A)$.
\end{lem}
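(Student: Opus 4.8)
The goal is to show that $C_c(X)E(A_0)$ is dense in $Z(A)$, where $Z(A)$ is the completion of $A_0 = C_c(X)AC_c(X)$ in the norm coming from $\langle a,b\rangle_R = E(a^*b)$. Since $A_0$ is by construction dense in $Z(A)$, the plan is to show that every generator $fag$ of $A_0$ (with $f,g\in C_c(X)$ and $a\in A$) lies in the closure of $C_c(X)E(A_0)$ with respect to the $Z(A)$-norm.

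The key mechanism I would exploit is \lemref{Eg=1}: given $f\in C_c(X)$, there exists $e\in C_c(X)$ with $E(e)f=f$, so that the element $fag$ can be rewritten as $E(e)fag = E(e)\cdot(fag)$. The factor $fag$ is an element of $A_0$, and $E(e)(fag)$ has the shape ``$C_c(X)$-function times $E$ of something.'' To turn this into an honest element of $C_c(X)E(A_0)$, I would use \lemref{fixed}: since $E(e)\in M(A)^\a$ (indeed $E$ of a function lands in $M(A)^\a$ by \propref{subalg}), and more usefully since $E$ intertwines multiplication by $\alpha$-invariant elements, I can pull the $E(e)$ inside, writing expressions of the form $E(e)E(b) = E(eE(b))$ or relate $E(e)\cdot(fag)$ to elements $kE(b)$ with $k\in C_c(X)$ and $b\in A_0$. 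The precise bookkeeping here is where the argument gets delicate.

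The main obstacle, and the reason the lemma is nontrivial, is that density must be verified in the $Z(A)$-norm $\|a\|_{Z(A)} = \|E(a^*a)\|^{1/2}$ rather than in the ambient $C^*$-norm of $A$, and these two norms are genuinely different. So I cannot simply approximate $fag$ by elements of $C_c(X)E(A_0)$ in the $A$-norm and expect the $Z(A)$-norm estimate to follow for free; I need to control $E((fag - c)^*(fag - c))$ for suitable $c\in C_c(X)E(A_0)$. The strategy is to bound the $Z(A)$-inner product of the difference using the formula $\langle a,b\rangle_R = E(a^*b)$ together with the norm-continuity of maps $c\mapsto E(fc g)$ (recorded just before \lemref{E=int} and again after it), so that a norm approximation of the ``inner'' $A$-valued piece transfers to a $Z(A)$-norm approximation after applying $E$ with fixed $C_c(X)$-cutoffs on either side. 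The compact-support control from \lemref{key lemma} is what guarantees the relevant cutoff functions $k\in C_c(X)$ exist, keeping everything inside $C_c(X)E(A_0)$.

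Concretely, I would proceed as follows. Fix a generator $fag\in A_0$ and use \lemref{Eg=1} to get $e\in C_c(X)$ with $E(e)f = f$; then $fag = E(e)fag$. Approximating $a$ in the $A$-norm by finite sums $\sum a_i' h_i a_i''$ with $h_i\in C_c(X)$ (using nondegeneracy of $\phi$ as in the proof of \propref{is-proper}), I reduce to showing each $E(e)\cdot f a_i' h_i a_i'' g$ is $Z(A)$-approximable by $C_c(X)E(A_0)$; applying \lemref{fixed} to move $E(e)$ across and \lemref{key lemma} to supply compactly-supported cutoffs, these elements are rewritten in the form $kE(b)$ with $k\in C_c(X)$, $b\in A_0$, landing in $C_c(X)E(A_0)$. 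The final step is to check that the $A$-norm approximation of $a$ yields a $Z(A)$-norm approximation of $fag$, which follows from the continuity of $c\mapsto E(g^*c^*cg)$-type expressions and the explicit formula $\langle\cdot,\cdot\rangle_R=E((\cdot)^*(\cdot))$. I expect the transfer from $A$-norm to $Z(A)$-norm estimates to be the crux, and I would handle it by a Cauchy–Schwarz bound for the $A_0$-valued inner product combined with the fixed-cutoff continuity statements already established in \secref{fix}.
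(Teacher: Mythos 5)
Your proposal never uses the hypothesis that $G$ acts \emph{freely} on $X$, and the lemma is false without it, so the outline cannot be completed by any amount of bookkeeping. Concretely, take $X$ to be a point and $G=\Z/2$, so the comma category consists of all systems $(A,\a)$ (with $\phi$ the unital inclusion of $\C$), and let $A=\C\oplus\C$ with $\a$ the flip. Then $A_0=A$, $E(a)=a+\a(a)$, and $C_c(X)E(A_0)=E(A_0)$ is the diagonal copy of $\C$; on the other hand $\<a,a\>_R=E(a^*a)$ gives $\|(z,w)\|_{Z(A)}^2=|z|^2+|w|^2$, so $Z(A)\cong\C^2$ with the $\ell^2$-norm and the diagonal is a proper \emph{closed} subspace, not a dense one. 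Every tool you invoke --- \lemref{Eg=1}, \lemref{fixed}, \lemref{key lemma}, nondegeneracy of $\phi$, and the norm-continuity of $c\mapsto E(fcg)$ --- is valid in this proper, non-free example, so no argument built from them alone can prove density. The precise step that fails is the claimed rewriting: after factoring $fag=E(e)\,(fag)$ you have an element of $E(C_c(X))\,A_0$, i.e.\ ($E$ of a function) times (an element of $A_0$), whereas what you need is an element of $C_c(X)\,E(A_0)$ --- the $E$ sits on the wrong factor. \lemref{fixed} only moves $\a$-invariant multipliers into and out of expressions \emph{already} of the form $E(\cdot)$; it gives no way to wrap the bare element $fag$ inside an $E$. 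In the example above, $E(C_c(X))A_0$ is all of $A_0$ while $C_c(X)E(A_0)$ is the diagonal, so the two spaces genuinely differ. (By contrast, the norm-transfer issue you flag as the crux is the tractable part: for fixed cutoffs $f,g$, positivity of $E$ on $\mm$ and the continuity of $a\mapsto E(g^*ag)$ do convert $A$-norm approximations into $Z(A)$-norm approximations.)

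The missing idea is saturation of $(C_0(X),\g)$, which is exactly where freeness enters, and it forces the proof out of the purely algebraic framework of \secref{fix} and into the module picture. The paper's proof views $Z(A)$ as a left module over $A\times_{\a,r}G$: the morphism $\phi\times_r G:C_0(X)\times_{\g,r}G\to A\times_{\a,r}G$ makes $C_0(X)\times_{\g,r}G$ act nondegenerately on $Z(A)$, and freeness guarantees that $\g$ is saturated, so that the inner products ${}_L\<f,g\>$ with $f,g\in C_c(X)$ span a dense subspace of $C_0(X)\times_{\g,r}G$. Nondegeneracy then makes the elements ${}_L\<f,g\>\cdot a$, $a\in A_0$, dense in $Z(A)$, and the computation ${}_L\<f,\bar g\>a=\int_G f\a_s(ga)\,ds=fE(ga)$ (from \lemref{E=int}) identifies these with elements of $C_c(X)E(A_0)$. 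In other words, the lemma is precisely the statement that the part of $C_0(X)\times_{\g,r}G$ spanned by its own inner products acts nondegenerately on $Z(A)$; in the non-free case this span is a proper ideal and the statement fails --- compare the remark closing \secref{bimodules}, where it is noted that for non-free actions Rieffel's theory only relates $\fix(A,\a,\phi)$ to an ideal $I(A)$ of $A\times_{\a,r}G$. Any correct proof must bring in freeness through some such mechanism; manipulations with $E$, cutoffs, and Cauchy--Schwarz alone cannot do it.
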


\begin{proof}
The homomorphism $\phi:C_0(X)\to M(A)$, which is a morphism in the category $\CC$, induces a morphism
\[\p\times_r G:C_0(X)\times_{\g,r} G\to A\times_{\a,r} G
\]
in $\CC$. Since $A\times_{\a,r} G$ acts nondegenerately on the bimodule $Z(A)$, so does $C_0(X)\times_{\g,r} G$.  The action $\g$ of $G$ on $C_0(X)$ is saturated, which ensures that the inner products
\[
{}_L\<f,g\>(s)= f\g_s(\bar g)\D(s)^{-1/2}
\]
for $f,g\in C_c(X)\subset Z(C_0(X))$ span a dense subspace of $C_0(X)\times_{\g,r} G$. Thus the elements
\[
{}_L\<f,g\>a=\int_G f\g_s(\bar g)\a_s(a)\,ds
=\int_G f\a_s(\bar ga)\,ds
\]
for $f,g\in C_c(X)\subset Z(C_0(X))$ and $a\in A_0$ span a dense subspace of $Z(A)$. Since every $a\in A_0$ can be factored as $a=ga$, and
\[
fE(a)=fE(ga)=\int_G f\a_s(ga)\,ds={}_L\<f,\bar g\>a,
\]
we deduce that the elements $fE(a)$ span a dense subspace.
\end{proof}

\begin{cor}
\label{dense corollary}
If $\s:(A,\a,\p)\to (B,\b,\psi)$ is a morphism in the comma category $(C_0(X),\g)\dn \AA(G)$, then $\s(A_0)E(B_0)$ is dense in $Z(B)$.
\end{cor}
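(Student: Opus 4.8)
The plan is to reduce to \lemref{dense} applied to the target system $(B,\b,\psi)$ --- which shows $C_c(X)E(B_0)$ is dense in $Z(B)$ --- and then to prove that the closed span $W$ of $\s(A_0)E(B_0)$ in $Z(B)$ contains $C_c(X)E(B_0)$. Almost all of the work goes into showing that $W$ is invariant under a crossed-product action; once that is in hand the argument closes quickly.

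First I would record a norm estimate. For $a\in A_0$ and $b\in B_0$, using \propref{is-proper}, \lemref{fixed} (applied to the $\b$-invariant multiplier $E(b)$) and the intertwining relation $E^\b\circ\s=\s\circ E^\a$, one computes
\[
\<\s(a)E(b),\s(a)E(b)\>_R=E(b)^*\,\s\!\big(\<a,a\>_R\big)\,E(b),
\]
so that $\|\s(a)E(b)\|_{Z(B)}\le\|E(b)\|\,\|a\|_{Z(A)}$. Hence, for each fixed $b$, the map $a\mapsto\s(a)E(b)$ extends to a bounded map $Z(A)\to Z(B)$ with range in $W$. I would then use this to show that $W$ is invariant under the left action of $A\times_{\a,r}G$ on $Z(B)$ coming from $\s\times_r G$: for $F\in C_c(G,A)$ and a spanning vector $\s(a)E(b)$, the equivariance of $\s$ and the $\b$-invariance of $E(b)$ give
\[
\big((\s\times_r G)F\big)\cdot\big(\s(a)E(b)\big)=\s(F\cdot a)\,E(b),
\]
where $F\cdot a$ is the left action of $A\times_{\a,r}G$ on $a\in A_0\subset Z(A)$; since $F\cdot a\in Z(A)$, the boundedness just established puts $\s(F\cdot a)E(b)$ in $W$. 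Because the $C_0(X)\times_{\g,r}G$-action on $Z(B)$ factors through $A\times_{\a,r}G$ via $\p\times_r G$, it follows that $W$ is also invariant under the left action of $L:=C_0(X)\times_{\g,r}G$.

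Finally I would combine this invariance with \lemref{dense}. Invariance gives $W\supseteq L\cdot\s(A_0)E(B_0)$, and computing the $L$-action exactly as in \lemref{dense} (via saturation of $\g$, so that the inner products ${}_L\<f,g\>$ with $f,g\in C_c(X)$ span a dense subspace of $L$) identifies the closed span of $L\cdot\s(A_0)E(B_0)$ with that of $C_c(X)\,\s(E(A_0))\,E(B_0)$. By \propref{fixonmorph} the restriction $\s|$ is nondegenerate, so $\s(E(A_0))E(B_0)$ is dense in $\fix(B,\b,\psi)$; together with the easy boundedness of $m\mapsto fm\colon\fix(B,\b,\psi)\to Z(B)$ for $f\in C_c(X)$, this yields $C_c(X)E(B_0)\subseteq W$. \lemref{dense} then forces $W=Z(B)$.

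The main obstacle is the left-invariance of $W$. One cannot show directly that $\s(A_0)E(B_0)$ is stable under the $L$-action, because ${}_L\<f,g\>$ carries a ``raw'' vector $\s(a)E(b)$ to the ``averaged'' vector $fE(\s(a'))E(b)$, with no apparent route back into the closed span of $\s(A_0)E(B_0)$. This circularity is broken only by lifting the problem to the $A\times_{\a,r}G$-action, where the computation closes because of the norm estimate above.
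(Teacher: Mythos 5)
Your argument is correct --- there is no circularity, since the computations you import are independent of this corollary --- but it is a genuinely different and much heavier route than the paper's. The paper's proof is two lines: for $f,g\in C_c(X)$, $a\in A_0$, $b\in B_0$, the comma-category identity $\psi=\s\circ\p$ gives $\s(fag)E(b)=f\s(a)gE(b)$; since $C_c(X)E(B_0)$ is dense in $Z(B)$ (\lemref{dense} applied to $(B,\b,\psi)$, exactly as you use it), and since $\s(A_0)$ and $C_c(X)$ both act nondegenerately on $Z(B)$ as multipliers of $B\times_{\b,r}G$ (hence by bounded operators), the closed span of the elements $f\s(a)gE(b)$ is all of $Z(B)$. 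In particular, no invariance of the span under any crossed-product action is needed, and the nondegeneracy used is that of the multiplier actions on the Hilbert module, not the subtler nondegeneracy of $\s|$ from \propref{fixonmorph} that your proof invokes. Your route instead front-loads the two computations that the paper defers to the proof of \thmref{XAnatequiv}: your norm estimate is the diagonal case of the inner-product computation for $\Phi$ there, and your formula $((\s\times_r G)F)\cdot(\s(a)E(b))=\s(F\cdot a)E(b)$ is its left-action computation; this works, but duplicates effort, since the corollary is then used again inside that very proof. Two further remarks. First, your passage from invariance under $(\s\times_r G)(C_c(G,A))$ to invariance under $L=C_0(X)\times_{\g,r}G$ is glossed: the elements $(\p\times_r G)({}_L\<f,g\>)$ lie in $M(A\times_{\a,r}G)$, not in $C_c(G,A)$, so after extending invariance to $(\s\times_r G)(A\times_{\a,r}G)$ by continuity you still need an approximate-identity argument --- write $(\s\times_r G)(m)\cdot w=\lim_i(\s\times_r G)(me_i)\cdot w$ for an approximate identity $(e_i)$ of $A\times_{\a,r}G$, using nondegeneracy of $\s\times_r G$ and strict continuity of the module action --- to handle multipliers; this is standard but should be said. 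Second, your closing claim that the ``circularity'' can only be broken by lifting to the $A\times_{\a,r}G$-action is mistaken: the paper's direct argument never needs the span to be invariant under anything, so the obstacle you describe is an artifact of your chosen strategy rather than a feature of the problem.
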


\begin{proof}
For $f,g\in C_c(X)$, $a\in A_0$, and $b\in B_0$ we have
\[\s(fag)E(b)=f\s(a)gE(b).\]
This suffices, because $C_c(X) E(B_0)$ is dense in $Z(B)$ by Lemma~\ref{dense}, and both $\s(A_0)$ and $C_c(X)$ act nondegenerately on $Z(B)$.
\end{proof}

\begin{proof}[Proof of Theorem~\ref{XAnatequiv}]
It is easily checked that $\Phi$ respects the right module action of $\fix (B,\b,\psi)$. To see that it preserves the inner products, we let $a,c\in A_0$, $b,d\in B_0$, and compute:
\begin{align*}
\langle \Phi(a\otimes E(b)),\Phi(c\otimes E(d))\rangle_R
&=E\big(E(b^*)\s(a^*c)E(d)\big)\\
&=E(b^*)E(\s(a^*c))E(d)\righttext{(by Lemma~\ref{fixed})}\\
&=E(b^*)\s(E(a^*c))E(d)\\
&=E(b^*)\s(\langle a,c\rangle_R) E(d)\\
&=\langle E(b),\langle a,c\rangle_R\cdot E(d)\rangle_R\\
&=\langle a\otimes E(b), c\otimes E(d)\rangle_R.
\end{align*}
To check that $\Phi$ preserves the left action, we note that the action of $f\in C_c(G,A)\subset A\times_{\a,r}G$ is given by a norm-convergent integral, which allows us to compute as follows:
\begin{align*}
\Phi(f\cdot (a\otimes E(b)))&=\s(f\cdot a)E(b)\\
&=\s\Big(\int f(s)\a_s(a)\D(s)^{1/2}\,ds\Big)E(b)\\
&=\int \s(f(s)\a_s(a))E(b)\D(s)^{1/2}\,ds\\
&=\int (\s\times_r G)(f)(s)\b_s(\s(a)E(b))\D(s)^{1/2}\,ds\\
&\hspace*{2.2in}\text{(since $E(b)\in M(B)^\beta$)}\\
&=(\s\times_r G)(f)\cdot \Phi(a\otimes E(b)).
\end{align*}
So $\Phi$ is a norm-preserving bimodule homomorphism. Corollary~\ref{dense corollary} says that $\Phi$ has dense range, and hence $\Phi$ is an isomorphism, as claimed.
\end{proof}

\begin{rem}
When $G$ does not act freely, the action $\g$ on $C_0(X)$ is not saturated, and we do not expect actions in the comma category to be saturated either. Rieffel's theory will then give a Morita equivalence between the generalized fixed-point algebra $\fix(A,\alpha,\phi)$ and an ideal $I(A)$ in $A\times_{\alpha,r}G$. It is tempting to conjecture that this equivalence is also natural, but it is not even obvious to us that the assignment $(A,\alpha,\phi)\mapsto I(A)$ is a functor in the necessary sense.
\end{rem}


\section{Landstad duality for coactions}\label{sec-ld}

Applying Corollary~\ref{fix functor} to the action $\rt:G\to \aut C_0(G)$ induced by right translation on $X=G$ gives a functor $\fix$ from $(C_0(G),\rt)\dn\AA(G)$ to $\CC$. As a map on objects, the functor $\fix$ underlies the Landstad duality for coactions in \cite[Theorem~3.3]{Q:landstad}, and we will augment the functor $\fix$ to give a categorical version of Landstad duality for coactions, parallel to the categorical Landstad duality for actions described in \cite{clda}.

The Landstad duality of \cite{Q:landstad} identifies the $C^*$-algebras which are isomorphic to crossed products by a coaction. The coactions in \cite{Q:landstad} are the \emph{reduced} coactions studied in \cite{lprs}, which are homomorphisms $\delta$ from a $C^*$-algebra $B$ to $M(B\otimes C_r^*(G))$ (see \cite[Definition~2.1]{lprs} for the full details). Theorem~3.7 of \cite{lprs} implies that the crossed product $B\times_\delta G$ is universal for a family of covariant homomorphisms $(\pi,\mu)$ consisting of nondegenerate homomorphisms $\pi:B\to M(C)$, $\mu:C_0(G)\to M(C)$ (where $C$ is any $C^*$-algebra) such that
\begin{equation}\label{defcov}
\pi\otimes\id(\delta(b))=\ad\mu\otimes\id(W_G)(\pi(b)\otimes 1)\righttext{for $b\in B$,}
\end{equation}
where we view $W_G:s\mapsto \lambda_s$ as a multiplier of $C_0(G,C_r^*(G))=C_0(G)\otimes C_r^*(G)$;
the $C^*$-algebra $B\times_\delta G$ is generated by a canonical covariant homomorphism $(j_B,j_G)$ in $M(B\times_\delta G)$. There is a natural dual action $\what\d:G\to \aut B\times_\d G$ which is characterized by
\[
\what\d_t(j_B(b)j_G(f))=j_B(b)j_G(\rt_t(f)).
\]

We can now restate Theorem~3.3 of \cite{Q:landstad} as follows:

\begin{thm}[\cite{Q:landstad}]\label{Qld}
Let $(A,\a,\phi)$ be an object in $(C_0(G),\rt)\dn\AA(G)$. Then
\[
\d(c):=\ad \p\otimes\id(W_G)(c\otimes 1)
\]
defines a reduced coaction $\d=\d^A$ of $G$ on $\fix A:=\fix(A,\a,\phi)$,
and there is an isomorphism  $\t=\t^A$ of $(\fix A)\times_\d G$ onto $A$ such that $\t\circ \a=\what\d$ and
\begin{equation}\label{charquiggiso}
\t\bigl(j_{\fix A}(c)j_G(f)\bigr)=c\p(f)
\righttext{for $c\in \fix A,\ f\in C_0(G)$.}
\end{equation}
\end{thm}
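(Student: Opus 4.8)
The plan is to read the coaction $\d$ straight off the formula, to construct $\t$ from the universal property of the coaction crossed product recorded just before the statement, and then to prove that $\t$ is an isomorphism; I expect the injectivity of $\t$ to be the main obstacle. First I would check that $\d(c):=\ad(\p\otimes\id)(W_G)(c\otimes 1)$ is a reduced coaction on $\fix A$. Since $c\mapsto c\otimes 1$ is injective and conjugation by the unitary multiplier $(\p\otimes\id)(W_G)$ is an automorphism of $M(A\otimes C^*_r(G))$, the map $\d$ is an injective homomorphism into $M(A\otimes C^*_r(G))$, and the only real content is that it takes values in $M(\fix A\otimes C^*_r(G))$. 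For this I would use the equivariance $\a_t\circ\p=\p\circ\rt_t$, which says that $(\p\otimes\id)(W_G)$ implements a coaction $\d_A$ of $G$ on $A$ with $\d_A(a)=\ad(\p\otimes\id)(W_G)(a\otimes 1)$, and then verify that $\d_A$ carries the $\a$-invariant subalgebra $\fix A\subset M(A)^\a$ into $M(\fix A\otimes C^*_r(G))$. The coaction identity $(\d\otimes\id)\circ\d=(\id\otimes\d_G)\circ\d$, where $\d_G$ is the comultiplication on $C^*_r(G)$, then reduces to the comultiplicativity $(\id\otimes\d_G)(W_G)=(W_G)_{12}(W_G)_{13}$ of the fundamental unitary, and nondegeneracy of $\d$ follows since $W_G$ is unitary.

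Next I would construct $\t$ using the universal property of $(\fix A)\times_\d G$. The pair consisting of the inclusion $\iota\colon\fix A\hookrightarrow M(A)$ and $\mu:=\p\colon C_0(G)\to M(A)$ is a covariant homomorphism, because the covariance condition \eqref{defcov} for $(\iota,\p)$ is \emph{exactly} the defining formula for $\d$. Universality then produces a homomorphism $\t\colon(\fix A)\times_\d G\to M(A)$ with $\t\circ j_{\fix A}=\iota$ and $\t\circ j_G=\p$, which immediately gives the characterization \eqref{charquiggiso}. Equivariance I would check on the generating elements $j_{\fix A}(c)j_G(f)$: applying $\what\d_t$ turns $f$ into $\rt_t(f)$, and
\[
\t\bigl(j_{\fix A}(c)j_G(\rt_t f)\bigr)=c\,\p(\rt_t f)=c\,\a_t(\p(f))=\a_t\bigl(c\,\p(f)\bigr),
\]
using equivariance of $\p$ and the $\a$-invariance of $c\in\fix A\subset M(A)^\a$; hence $\t$ intertwines $\what\d$ and $\a$.

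It remains to see that $\t$ is an isomorphism onto $A$. Since $(\fix A)\times_\d G=\clspn j_{\fix A}(\fix A)\,j_G(C_0(G))$, the range of $\t$ is the closed span of $\{c\,\p(f):c\in\fix A,\ f\in C_0(G)\}$, and I would show this equals $A$ by a density argument built on the averaging process: \lemref{E=int} gives $\p(f)E(gah)=\int_G f\a_s(gah)\,ds\in A$ for $f,g,h\in C_c(G)$ and $a\in A$, so these products already lie in $A$, and properness of $\a$ together with the nondegeneracy of $\p$ forces them to be dense in $A$. The main obstacle is \textbf{injectivity} of $\t$, equivalently faithfulness of the integrated covariant representation $(\iota,\p)$. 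I would establish this by realizing $(\fix A)\times_\d G$ through the regular representation attached to the reduced coaction $\d$ and checking that $(\iota,\p)$ reproduces that regular representation, so that $\t$ introduces no kernel; this is exactly where the reduced nature of $\d$, built into the definition in \cite{lprs}, is essential. The Rieffel Morita equivalence $Z(A)$ of \S\ref{bimodules}, which links both $A$ and $(\fix A)\times_\d G$ back to $\fix A$, provides supporting structure for this comparison. Once $\t$ is both injective and has dense range, it is the desired isomorphism.
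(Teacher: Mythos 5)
Before anything else, note that the paper does not actually prove this statement: the theorem is imported, citation and all, from Theorem~3.3 of \cite{Q:landstad} (``We can now restate Theorem~3.3 of \cite{Q:landstad} as follows''), so there is no internal proof to compare yours against, and what you have written must be judged as a reconstruction of Quigg's theorem. The soft parts of your outline are fine: covariance of the pair $(\iota,\p)$ is indeed just the defining formula for $\d$, the coaction identity does reduce to $(\id\otimes\d_G)(W_G)=(W_G)_{12}(W_G)_{13}$, the characterization \eqref{charquiggiso} comes straight from the universal property in Theorem~3.7 of \cite{lprs}, and your equivariance computation on generators is correct.

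However, the two steps you yourself isolate as the crux are not proved, and for injectivity the principle you appeal to is false. (i) There is no general fact that, for a reduced coaction, a covariant pair $(\pi,\mu)$ with $\pi$ injective has injective integrated form: take $B=\C$ with the trivial reduced coaction $\d(z)=z\otimes 1$; then \emph{every} nondegenerate $\mu:C_0(G)\to M(C)$ gives a covariant pair with $\pi$ injective, the crossed product is $C_0(G)$, and $\pi\times\mu=\mu$, which is not injective when $\mu$ is evaluation at a point of $G$. So injectivity of $\t$ cannot follow from ``reducedness of $\d$'' plus an abstract regular-representation comparison; it must exploit the specific Landstad structure --- the $\rt$--$\a$ equivariance of $\p$ and the fact that $\fix A=\overline{E(A_0)}$ carries the faithful averaging $E$ --- for instance by producing an explicit inverse $A\to(\fix A)\times_\d G$ inside $M(\fix A\otimes \KK(L^2(G)))$, or by intertwining faithful expectations; this is exactly where the hard work in \cite{Q:landstad} lies, and your sketch contains none of it. (ii) The claim that $\d$ maps $\fix A$ into $M(\fix A\otimes C^*_r(G))$ --- more precisely, that $\d(\fix A)(1\otimes C^*_r(G))$ lies in, and is dense in, $\fix A\otimes C^*_r(G)$, as the definition of a reduced coaction requires --- is a genuine computation with the averaged elements $E(fag)$, not a routine verification, and you give no indication of how it would go. (iii) A smaller but real gap: density of $\clspn\{c\p(f):c\in\fix A,\ f\in C_0(G)\}$ in $A$ does not follow from the phrase ``properness together with nondegeneracy forces,'' and you cannot borrow \lemref{dense} for it, since that lemma gives density in $Z(A)$, whose norm $\|E(a^*a)\|^{1/2}$ is not the norm of $A$.
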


The reduced coactions $(B,\d)$ of $G$ are the objects in a category $\CC^r(G)$, whose morphisms
$\pi:(B,\d)\to (C,\epsilon)$
are the nondegenerate homomorphisms $\pi:B\to M(C)$ such that $(\pi\otimes\id)\circ\d=\epsilon\circ \pi$.
If $\s:(A,\a,\phi)\to (B,\b,\psi)$ is a morphism in the comma category $(C_0(G),\rt)\dn \AA(G)$, then Proposition~\ref{fixonmorph} implies that $\s$ restricts to a morphism $\s|:\fix (A,\a,\phi)\to \fix (B,\b,\psi)$ in $\CC$. The restriction $\s|$ satisfies
\[
\s|\otimes \id(\phi\otimes\id(W_G))=(\s\circ\phi)\otimes\id(W_G)=\psi\otimes\id(W_G).
\]
Thus $(\s|\otimes\id)\circ\d^A=\delta^B\circ \s|$, so $\s|$
is a morphism
in the category $\CC^r(G)$.
In this way, we
extend $\fix$ to a functor $\fix^r$ from $(C_0(G),\rt)\dn \AA(G)$ to $\CC^r(G)$.

\begin{thm}\label{Thmfromcldc}
The functor $\fix^r:(C_0(G),\rt)\dn \AA(G)\to \CC^r(G)$
is a category equivalence, with quasi-inverse given by
the crossed-product functor $\CP^r$
which assigns
\[
(B,\d)\mapsto (B\times_\d G,\what\d,j_G)
\and
\pi\mapsto \pi\times G.
\]
\end{thm}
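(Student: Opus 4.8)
The plan is to prove that $\fix^r$ and $\CP^r$ are quasi-inverse by producing natural isomorphisms for both composites, taking the object components from \thmref{Qld} and reducing everything else to the functoriality of the coaction crossed product. First I would confirm that $\CP^r$ really lands in the comma category and is a functor: the canonical $j_G\colon C_0(G)\to M(B\times_\d G)$ is nondegenerate and, by the characterization of the dual action, satisfies $\what\d_t\circ j_G=j_G\circ\rt_t$, so $(B\times_\d G,\what\d,j_G)$ is an object of $(C_0(G),\rt)\dn\AA(G)$; and for a coaction morphism $\pi\colon(B,\d)\to(C,\epsilon)$ the homomorphism $\pi\times G$ satisfies the standard relations $(\pi\times G)\circ j_B=j_C\circ\pi$ and $(\pi\times G)\circ j_G=j_G$, hence is equivariant for the dual actions and fixes $j_G$, i.e.\ is a morphism in the comma category.

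For the composite $\CP^r\circ\fix^r$ I would use the isomorphisms $\t^A$ of \thmref{Qld} as the components of a natural transformation to the identity. Each $\t^A$ is an isomorphism in the comma category: $\t^A\circ\what{\d^A}=\a$ is part of \thmref{Qld}, and setting $c=1$ in \eqref{charquiggiso} gives $\t^A\circ j_G=\p$, so $\t^A$ respects the structure maps. Naturality amounts to $\s\circ\t^A=\t^B\circ(\s|\times G)$ for every morphism $\s\colon(A,\a,\p)\to(B,\b,\psi)$, which I would verify on the generators $j_{\fix A}(c)j_G(f)$: the left side is $\s(c\p(f))=\s|(c)\psi(f)$ using $\s\circ\p=\psi$, while the right side equals $\t^B\bigl(j_{\fix B}(\s|(c))j_G(f)\bigr)=\s|(c)\psi(f)$ by the relations above and \eqref{charquiggiso} for $B$. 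Thus $\t$ is a natural isomorphism $\CP^r\circ\fix^r\iso\id$.

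For the composite $\fix^r\circ\CP^r$ the natural candidate is $j_B$, and the key point is the identification $\fix(B\times_\d G)=j_B(B)$ inside $M(B\times_\d G)$. The inclusion $\supseteq$, together with the needed density, is the object content of Landstad duality recorded in \thmref{Qld} (Theorem~3.3 of \cite{Q:landstad}); the inclusion $\subseteq$ is a direct computation, since $E^\rt(f)$ is the constant $\int_G f$ and $j_B(b)$ is $\what\d$-invariant, so \lemref{fixed} gives $E(j_B(b)j_G(f))=(\int_G f)\,j_B(b)\in j_B(B)$ and, after expanding a general element of $A_0$ using $B\times_\d G=\clspn\{j_B(b)j_G(f)\}$, $E(A_0)\subseteq j_B(B)$. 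Under this identification the coaction $\d'$ on $\fix(B\times_\d G)$ furnished by \thmref{Qld} satisfies $\d'(j_B(b))=\ad(j_G\otimes\id)(W_G)(j_B(b)\otimes1)$, which is precisely $(j_B\otimes\id)(\d(b))$ by the covariance relation \eqref{defcov} for the canonical pair $(j_B,j_G)$; hence $j_B$ is an isomorphism of coactions $(B,\d)\iso(\fix(B\times_\d G),\d')$. Naturality, $\fix^r(\pi\times G)\circ j_B=j_C\circ\pi$, follows from $(\pi\times G)\circ j_B=j_C\circ\pi$ and the fact that $\fix^r(\pi\times G)$ is just the restriction of $\pi\times G$ to $j_B(B)$.

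These two natural isomorphisms show that $\fix^r$ and $\CP^r$ are quasi-inverse, proving the theorem. I expect the main obstacle to be the identification $\fix(B\times_\d G)=j_B(B)$ with matching coaction: the inclusion $\subseteq$ and the coaction-compatibility are clean, but the density half is exactly Quigg's object-level duality and is the one genuinely nontrivial ingredient. By contrast, once the object-level isomorphisms and the relations satisfied by $\pi\times G$ are in hand, every naturality statement reduces to evaluation on the canonical generators $j_B(b)j_G(f)$, so the category-theoretic bookkeeping—that $\fix^r$ and $\CP^r$ are mutually inverse on morphisms—is routine and carries no further difficulty.
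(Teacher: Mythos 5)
Your treatment of the composite $\CP^r\circ\fix^r$ is exactly the paper's: both arguments prove that the isomorphisms $\t^A$ of \thmref{Qld} are natural by checking $\s\circ\t^A=\t^B\circ(\s|\times G)$ on the generators $j_{\fix A}(c)j_G(f)$, and your functoriality checks for $\CP^r$, as well as the covariance computation showing that (granted your identification) $j_B$ intertwines $\d$ with the coaction of \thmref{Qld}, are fine. But for the other composite the paper does something quite different: it never computes $\fix^r\circ\CP^r$ at all. Instead it shows that $\fix^r$ is \emph{bijective on morphism sets} --- injective because the elements $E(a)\p(f)$ span a dense subspace of $A$, surjective by transporting $\pi\times G$ back through the isomorphisms $\t$ --- and then concludes by the abstract nonsense of \cite[\S1]{clda}. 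That design deliberately sidesteps the need to identify the subalgebra $\fix(B\times_\d G,\what\d,j_G)$ of $M(B\times_\d G)$.

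Your route, by contrast, stands or falls with the identification $\fix(B\times_\d G,\what\d,j_G)=j_B(B)$, and this is where there is a genuine gap. That statement is \emph{not} recorded in \thmref{Qld}: applied to $A=B\times_\d G$, the theorem gives only an isomorphism $B\times_\d G\cong \fix(B\times_\d G)\times G$ in the comma category, and to deduce $(B,\d)\cong(\fix(B\times_\d G),\d')$ from it you would have to cancel the crossed product, which is precisely the fullness/faithfulness (or Landstad uniqueness) assertion being proved. Your direct argument for the inclusion $E(A_0)\subseteq j_B(B)$ also fails as written: the elements $j_B(b)j_G(f)$ that you average are not in $A_0$ (by \eqref{defA0} a $C_c(G)$ factor is required on \emph{both} sides), while a genuine element $j_G(f)aj_G(g)$ of $A_0$, after approximating $a\approx\sum_i j_B(b_i)j_G(h_i)$, produces terms $j_G(f)j_B(b_i)j_G(h_ig)$ in which the invariant factor sits between two non-invariant ones; \lemref{fixed} gives no information there, and since $E$ has only the two-sided norm continuity $a\mapsto E(faf')$, you cannot push limits through $E$ to reduce to your generators. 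The identification is in fact true, but it is genuinely nontrivial: in the paper it appears only as the $H=G$ case of equation~\eqref{fix=cp} in \secref{sec-man}, where it is deduced from the proof of \cite[Theorem~3.1]{HR:mansfield} (ultimately Mansfield's theory), not from \thmref{Qld}. To complete your proof you would need to import such a result; as it stands, the second natural isomorphism --- and with it the theorem --- is unproved.
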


\begin{proof}
By abstract nonsense, as in \cite[\S1]{clda}, it suffices to check that
$\CP^r\circ \fix^r\cong \id$ and
$\CP^r$ is full and faithful.
For the first statement, we must show that the isomorphism $\t^A$ of Theorem~\ref{Qld} is natural in $(A,\a,\p)$. To see this, suppose $\s:(A,\a,\phi)\to (B,\b,\psi)$ is a morphism. Then $\CP^r\circ \fix^r(\sigma)$ is the homomorphism $\sigma|\times G$, and it follows easily from \eqref{charquiggiso} that $\sigma\circ\t^A$ and $\t^B\circ(\sigma|\times G)$ agree on elements of the form $j_{\fix A}(c)j_G(f)$, and hence are equal.

Next we must show that
for every pair of objects $(A,\a,\phi)$ and $(B,\b,\psi)$,
\[
\fix^r:\mor((A,\a,\phi),(B,\b,\psi))\to\mor(\fix^r(A,\a,\p),\fix^r(B,\b,\psi))
\]
is a bijection.
To establish injectivity, suppose $\s,\tau:(A,\a,\phi)\to(B,\b,\psi) $ have $\s|=\tau|$. The last assertion in Theorem~\ref{Qld} implies that the elements $E(a)\phi(f)$ span a dense subspace of $A$, and
\[
\s(E(a)\phi(f))=\s|(E(a))\psi(f)=\tau|(E(a))\psi(f)=\tau(E(a)\phi(f)),
\]
so $\s=\tau$. For surjectivity, suppose that $\pi$ is a morphism from $\fix^r(A,\a,\phi)=(\fix A,\delta)$ to $\fix^r(B,\b,\psi)=(\fix B,\epsilon)$. Then the morphism $\pi\times G:(\fix A)\times_\delta G\to (\fix B)\times_\epsilon G$ satisfies
\[
\pi\times G(j_{\fix A}(c)j_G(f))=j_{\fix B}(\pi(c))j_G(f),
\]
and pulling this over under the isomorphisms $\theta$ of Theorem~\ref{Qld} gives a morphism $\s:(A,\a,\phi)\to (B,\b,\psi)$ such that
\begin{equation}\label{charsigma}
\s(c\phi(f))=\pi(c)\psi(f) \righttext{for $c\in \fix A$ and $f\in C_0(G)$.}
\end{equation}
But $\s(c\phi(f))=\s|(c)\psi(f)$, so \eqref{charsigma} implies that $\s|(c)=\pi(c)$, and we have $\s|=\pi$.
\end{proof}

Coaction cognoscenti usually work with full coactions rather than reduced ones, and might prefer to know the following analogue of \thmref{Thmfromcldc} for the classes of normal and maximal coactions. Our notation is explained in the Appendix.

\begin{cor}\label{normmax}
The crossed-product functors
\begin{align*}
&\CP^n:\CC^n(G)\to (C_0(G),\rt)\dn \AA(G)\righttext{and}\\
&\CP^m:\CC^m(G)\to (C_0(G),\rt)\dn \AA(G)
\end{align*}
are equivalences.
\end{cor}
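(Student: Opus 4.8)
The plan is to reduce the corollary to the reduced case already established in \thmref{Thmfromcldc} by transporting along the functors relating normal and maximal coactions to reduced ones. By \thmref{Thmfromcldc} the crossed-product functor $\CP^r:\CC^r(G)\to (C_0(G),\rt)\dn\AA(G)$ is a category equivalence, being a quasi-inverse of $\fix^r$. It therefore suffices, for each of $\CC^n(G)$ and $\CC^m(G)$, to produce a category equivalence into $\CC^r(G)$ through which the relevant crossed-product functor factors up to natural isomorphism: a composite of equivalences is an equivalence, and any functor naturally isomorphic to an equivalence is itself an equivalence.

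First I would invoke the facts assembled in the Appendix. The categorical version of the relationship between normal and reduced coactions should provide a category equivalence $R^n:\CC^n(G)\to\CC^r(G)$ given by reduction (indeed, one expects this to be an isomorphism of categories). Similarly, reduction, or normalization followed by reduction, should give a category equivalence $R^m:\CC^m(G)\to\CC^r(G)$, with maximalization providing a quasi-inverse. These are exactly the reduction and normalization functors whose properties the Appendix is designed to record.

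The key step is then to check that these functors intertwine the crossed-product functors, i.e.\ that there are natural isomorphisms $\CP^r\circ R^n\cong\CP^n$ and $\CP^r\circ R^m\cong\CP^m$ of functors into the comma category. On objects this is the standard principle that reduction, normalization and maximalization do not change the crossed product: the canonical maps relating $B\times_\d G$ to the crossed product of the reduced (respectively normalized or maximalized) coaction are isomorphisms that carry the dual action $\what\d$ to the dual action and the canonical map $j_G:C_0(G)\to M(B\times_\d G)$ to its counterpart, hence are isomorphisms of objects of $(C_0(G),\rt)\dn\AA(G)$. What remains is to verify that these isomorphisms are natural in the coaction---that they commute with the homomorphisms $\pi\times G$ induced by morphisms of coactions.

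I expect the main obstacle to be this naturality together with the comma-category bookkeeping, rather than any single object-level computation. One must track not merely the crossed-product algebras and their dual actions, but also the embeddings $j_G$ that realize them as objects of the comma category, and confirm that the reduction, normalization and maximalization isomorphisms respect all of this structure simultaneously and functorially. Once that is in place, the corollary follows formally by composing $\CP^r$ with $R^n$ and with $R^m$ and appealing to \thmref{Thmfromcldc}.
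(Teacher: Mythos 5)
Your proposal is correct and takes essentially the same route as the paper: the paper's proof likewise factors through the reduced case, citing Theorem~\ref{boilerplate} (reduction $\red:\CC^n(G)\to\CC^r(G)$ is an isomorphism of categories with $\CP^n=\CP^r\circ\red$) and Corollary~\ref{CPOKmax} (normalization is an equivalence on $\CC^m(G)$ with $\CP^m\cong\CP^n\circ\nor$), so that $\CP^n$ and $\CP^m$ are equivalences because $\CP^r$ is one by Theorem~\ref{Thmfromcldc}. The only cosmetic differences are that the paper gets an exact equality $\CP^n=\CP^r\circ\red$ rather than just a natural isomorphism, and factors $\CP^m$ through $\CP^n$ instead of directly through $\CP^r$; the naturality bookkeeping you flag as the main work is exactly what the Appendix (Proposition~\ref{CPOK} and Theorem~\ref{boilerplate}) supplies.
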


\begin{proof}
Since $\CP^r$ is a quasi-inverse for $\fix^r$, it is an equivalence. So it follows from Theorem~\ref{boilerplate} that $\CP^n=\CP^r\circ\red$ is an equivalence. Now Corollary~\ref{CPOKmax} implies that $\CP^m=\CP^n\circ\nor$ is also an equivalence.
\end{proof}

\section{Iterated Landstad duality}
\label{compare}

Landstad duality for actions, as formulated in \cite[Theorem~4.1]{clda}, gives an equivalence between $\AA(G)$ and a comma category of coactions. When we apply the equivalence of \thmref{Thmfromcldc} to this comma category, we obtain an equivalence between $\AA(G)$ and an iterated comma category in $\AA(G)$. In this section, we identify this iterated comma category and obtain interesting new information about Landstad duality for actions (see Remark~\ref{newinfo}).

We begin with two abstract lemmas about comma categories which will help us identify the iterated comma category. The first is similar to \cite[Corollary~2.2]{clda}.

\begin{lem}
\label{comma equivalence}
Let $F:C\to D$ be a category equivalence, and let $a\in \obj C$.
Then
the map $\wilde F:a\dn C\to Fa\dn D$, defined on objects by
$\wilde F(x,f)=(Fx,Ff)$ and on morphisms by $\wilde Fh=Fh$,
is an equivalence.
\end{lem}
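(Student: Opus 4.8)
The plan is to verify the three things required for $\widetilde F$ to be a well-defined functor that is moreover an equivalence of categories: that it sends objects and morphisms of $a\dn C$ into $Fa\dn D$, that it is a functor, and that it is fully faithful and essentially surjective. The main point to keep in mind throughout is that a morphism in a comma category is \emph{literally} a morphism in the underlying category satisfying a commuting-triangle condition, so $\widetilde F$ acts on an $a\dn C$-morphism $h\colon(x,f)\to(y,g)$ simply by applying $F$ to the underlying $C$-morphism $h\colon x\to y$.

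First I would check that $\widetilde F$ lands in $Fa\dn D$. On objects, $(x,f)$ with $f\colon a\to x$ maps to $(Fx,Ff)$ where $Ff\colon Fa\to Fx$ is a $D$-morphism, so this is a legitimate object of $Fa\dn D$. On morphisms, if $h\colon(x,f)\to(y,g)$ satisfies $h\circ f=g$, then applying the functor $F$ gives $Fh\circ Ff=Fg$, which is exactly the condition for $Fh$ to be a morphism $(Fx,Ff)\to(Fy,Fg)$ in $Fa\dn D$. Functoriality of $\widetilde F$ (preservation of identities and composition) is then immediate, since on underlying morphisms $\widetilde F$ agrees with $F$, and composition and identities in a comma category are computed in the underlying category.

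Next I would establish that $\widetilde F$ is full and faithful. Fix objects $(x,f)$ and $(y,g)$ of $a\dn C$. A morphism $(x,f)\to(y,g)$ is a $C$-morphism $h\colon x\to y$ with $h\circ f=g$, and a morphism $(Fx,Ff)\to(Fy,Fg)$ is a $D$-morphism $k\colon Fx\to Fy$ with $k\circ Ff=Fg$. Since $F$ is fully faithful, every such $k$ is of the form $k=Fh$ for a unique $C$-morphism $h\colon x\to y$; faithfulness of $\widetilde F$ is then inherited directly from that of $F$. For fullness, I must check that this unique $h$ actually satisfies the triangle condition $h\circ f=g$. From $Fh\circ Ff=Fg$ we get $F(h\circ f)=F(g)$, and faithfulness of $F$ forces $h\circ f=g$; hence $h$ is a genuine $a\dn C$-morphism with $\widetilde Fh=k$. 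This is the step requiring the most care, as it is where both halves of the fully-faithful hypothesis on $F$ are used together.

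Finally, for essential surjectivity, let $(z,k)$ be an object of $Fa\dn D$, so $k\colon Fa\to z$. Since $F$ is essentially surjective, choose $x\in\obj C$ and an isomorphism $u\colon Fx\iso z$ in $D$. Then $u\inv\circ k\colon Fa\to Fx$ is a $D$-morphism, and because $F$ is full there is a $C$-morphism $f\colon a\to x$ with $Ff=u\inv\circ k$. I claim $u$ is an isomorphism $(Fx,Ff)=\widetilde F(x,f)\to(z,k)$ in $Fa\dn D$: indeed $u\circ Ff=u\circ u\inv\circ k=k$, so $u$ respects the structure maps, and $u$ is already an isomorphism in $D$, hence in the comma category. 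Thus $(z,k)$ is isomorphic to an object in the image of $\widetilde F$, completing the proof that $\widetilde F$ is an equivalence.
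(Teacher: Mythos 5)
Your proof is correct and follows essentially the same route as the paper's: functoriality is routine, full-and-faithfulness comes from lifting a $D$-morphism $k$ uniquely through $F$ and then using faithfulness to recover the commuting-triangle condition $h\circ f=g$, and essential surjectivity comes from choosing an isomorphism $Fx\cong z$ and using fullness to produce the structure morphism $f$. The only cosmetic difference is the direction in which you orient the isomorphism in the essential-surjectivity step (the paper takes $\theta\colon z\to Fx$ and you take $u\colon Fx\to z$), which changes nothing.
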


\begin{proof}
It is routine to check that $\wilde F$ is a functor. As usual, we verify that $\wilde F$ is full, faithful, and essentially surjective.
For the essential surjectivity, let $(y,g)\in \obj Fa\dn D$.
Since $F$ is an
equivalence, there exists $x\in \obj C$ and an isomorphism $\t:y\to
Fx$ in $D$, and then for the same reason there exists $f\in
C(a,x)$ such that $\t\circ g=Ff$. Then
\[\t:(y,g)\iso (Fx,Ff)=\wilde
F(x,f)\midtext{in}Fa\downarrow D.\]

To see that $\wilde F$ is full and faithful, let $k:\wilde F(x,f)\to \wilde F(y,g)$ in $Fa\dn D$.
We must show that there is a unique $h:(x,f)\to (y,g)$ in $a\dn C$ such that $\wilde F h=k$.
We have $k:Fx\to Fy$ in $D$, so because $F$ is an equivalence there is a unique $h:x\to y$ in $C$ such that $Fh=k$.
Since $k\circ Ff=Fg$ in $D$, we have $h\circ f=g$ in $C$, again because $F$ is an equivalence. Thus $h:(x,f)\to (y,g)$ in $a\dn C$ and $\wilde Fh=k$. Moreover, $h$ is the unique such morphism in $a\dn C$, because of its uniqueness in $C$.
\end{proof}

\begin{lem}\label{comma-comma}
Let $D$ be a category, let $b\in\obj D$,
and let $(y,g)\in \obj (b\dn D)$.
Then the iterated comma category
$(y,g)\dn(b\dn D)$
is isomorphic to $y\dn D$.
\end{lem}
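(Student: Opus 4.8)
The plan is to write down an explicit isomorphism of categories directly from the definitions; no abstract machinery is needed. First I would unwind the iterated comma category. An object of $(y,g)\dn(b\dn D)$ consists of an object $(x,f)\in\obj(b\dn D)$ together with a morphism $h\colon(y,g)\to(x,f)$ in $b\dn D$; by the definition of morphisms in $b\dn D$, such an $h$ is a morphism $h\colon y\to x$ in $D$ satisfying $h\circ g=f$. The key observation is that this equation \emph{determines} $f$ from $h$ and the fixed morphism $g$, so the datum $f$ is redundant: an object of $(y,g)\dn(b\dn D)$ is nothing more than a morphism $h\colon y\to x$ in $D$, that is, an object $(x,h)$ of $y\dn D$.

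Guided by this, I would define a functor $\Psi\colon(y,g)\dn(b\dn D)\to y\dn D$ by $\Psi\bigl((x,f),h\bigr)=(x,h)$ on objects and $\Psi k=k$ on the underlying morphisms, together with a candidate inverse $\Phi\colon y\dn D\to(y,g)\dn(b\dn D)$ given by $\Phi(x,h)=\bigl((x,h\circ g),h\bigr)$, again the identity on underlying morphisms. It is immediate that $\Psi$ and $\Phi$ are mutually inverse bijections on objects, since $\Phi$ simply reinstates the forced morphism $f=h\circ g$.

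The one point that deserves care is matching up the morphisms. A morphism $\bigl((x,f),h\bigr)\to\bigl((x',f'),h'\bigr)$ in $(y,g)\dn(b\dn D)$ is a morphism $k\colon(x,f)\to(x',f')$ in $b\dn D$ with $k\circ h=h'$; spelled out in $D$, this is a morphism $k\colon x\to x'$ satisfying both $k\circ f=f'$ and $k\circ h=h'$. A morphism $(x,h)\to(x',h')$ in $y\dn D$ is just a morphism $k\colon x\to x'$ with $k\circ h=h'$. These coincide because the extra condition $k\circ f=f'$ is automatic: using $f=h\circ g$ and $f'=h'\circ g$ we get $k\circ f=k\circ h\circ g=h'\circ g=f'$. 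Hence $\Psi$ and $\Phi$ are well-defined functors, plainly respecting identities and composition (which in all three categories is just composition in $D$), and they are mutually inverse, giving the desired isomorphism of categories. I expect no genuine obstacle beyond keeping the bookkeeping straight; the entire content lies in recognizing that the middle morphism $f$ is forced, so that the additional commuting triangle comes for free.
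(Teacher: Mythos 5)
Your proposal is correct and follows essentially the same route as the paper's proof: both construct the explicit functor $((x,f),h)\mapsto(x,h)$ (identity on underlying morphisms) with inverse $(x,h)\mapsto((x,h\circ g),h)$, and both hinge on the observation that the datum $f=h\circ g$ and the compatibility condition $k\circ f=f'$ are forced, hence redundant. There is nothing to add.
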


\begin{proof}
Let $E=(y,g)\dn(b\dn D)$.
An object $((z,h),u)$ of $E$ comprises an object $z$ of $D$, a morphism $h\colon b\to z$ in $D$,
and a morphism $u\colon y\to z$ in $D$ such that $u\circ g = h$. A morphism $\phi\colon((z,h),u)\to((w,k),v)$ in $E$
is a morphism $\phi\colon z\to w$ in $D$
such that $\phi\circ h = k$ and $\phi\circ u = v$, and since $h=u\circ g$ and $k=v\circ g$, the equation $\phi\circ h = k$ is automatic given $\phi\circ u = v$. Thus the formulas
\[
G((z,h),u) = (z,u)
\quad\text{and}\quad
G(\phi) = \phi
\]
give well-defined maps on objects and morphisms from $E$ to $y\dn D$, and it is routine to check that $G$ is then a functor. The map
defined on objects and morphisms of $y\dn D$ by
\[
(z,u)\mapsto ((z,u\circ g),u)
\quad\text{and}\quad
\phi\mapsto\phi
\]
is an inverse for $G$, and thus $G$ is the desired isomorphism.
\end{proof}

For a locally compact group $G$, the canonical map $i_G^r:G\to UM(A\times_{\a,r}G)$ has an integrated form $i^{f,r}_G:C^*(G)\to M(A\times_{\a,r}G)$, which factors through a homomorphism $i^r_G:C_r^*(G)\to M(A\times_{\a,r}G)$ (so that $i^{f,r}_G=i^r_G\circ \lambda$).

\begin{prop}\label{comma-cor}
For any locally compact group $G$,
the assignments
\[
(A,\a)\mapsto
\bigl(A\times_{\a,r} G\times_{\what\a^r} G,\what{\what\a^r},i_G^r\times G\bigr)
\and
\p\mapsto \p\times_r G\times G
\]
give a category equivalence
\[
\AA(G)\sim
\bigl(C^*_r(G)\times_{\d_G^r} G,\what{\d_G^r}\;\bigr)\dn \AA(G).
\]
\end{prop}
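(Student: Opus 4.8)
The plan is to realize the asserted equivalence as a composite of three equivalences that are already available, and then to read off what the composite does on objects and morphisms. First I would invoke Landstad duality for actions \cite[Theorem~4.1]{clda}, in its reduced-coaction form, as a category equivalence
\[
L\colon\AA(G)\to (C_r^*(G),\d_G^r)\dn\CC^r(G)
\]
sending $(A,\a)$ to the dual-coaction system $\bigl((A\times_{\a,r}G,\what\a^r),i_G^r\bigr)$ and a morphism $\p$ to $\p\times_r G$. This presents $\AA(G)$ as a comma category inside the category $\CC^r(G)$ of reduced coactions, which is exactly the category handled by \thmref{Thmfromcldc}.

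Next I would transport this comma category across \thmref{Thmfromcldc}. Since $\CP^r$ is a quasi-inverse of $\fix^r$, it is itself an equivalence $\CC^r(G)\to (C_0(G),\rt)\dn\AA(G)$, so \lemref{comma equivalence}, applied with the object $a=(C_r^*(G),\d_G^r)$, gives an equivalence
\[
\wilde{\CP^r}\colon (C_r^*(G),\d_G^r)\dn\CC^r(G)\to \CP^r(C_r^*(G),\d_G^r)\dn\bigl((C_0(G),\rt)\dn\AA(G)\bigr).
\]
By the definition of $\CP^r$ in \thmref{Thmfromcldc} we have $\CP^r(C_r^*(G),\d_G^r)=\bigl(C_r^*(G)\times_{\d_G^r}G,\what{\d_G^r},j_G\bigr)$, so the target is an iterated comma category. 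Finally I would collapse it using \lemref{comma-comma}, applied with $D=\AA(G)$, $b=(C_0(G),\rt)$, and $(y,g)=\CP^r(C_r^*(G),\d_G^r)$, to obtain an isomorphism onto $\bigl(C_r^*(G)\times_{\d_G^r}G,\what{\d_G^r}\bigr)\dn\AA(G)$. Composing $L$, $\wilde{\CP^r}$, and the isomorphism of \lemref{comma-comma} then yields the stated equivalence.

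The remaining task is to check that the composite realizes the advertised assignments, and this is the only step requiring care. Tracing $(A,\a)$ through the three maps, $L$ produces $\bigl((A\times_{\a,r}G,\what\a^r),i_G^r\bigr)$, then $\wilde{\CP^r}$ produces $\bigl((A\times_{\a,r}G\times_{\what\a^r}G,\what{\what\a^r},j_G),\,i_G^r\times G\bigr)$ (using that $\CP^r$ sends $(B,\d)\mapsto(B\times_\d G,\what\d,j_G)$ and $\mu\mapsto\mu\times G$), and \lemref{comma-comma} then discards the middle $j_G$ to leave $\bigl(A\times_{\a,r}G\times_{\what\a^r}G,\what{\what\a^r},i_G^r\times G\bigr)$; on a morphism $\p$ the composite gives $(\p\times_r G)\times G=\p\times_r G\times G$. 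I expect the main obstacle to be the bookkeeping of the two canonical maps rather than any new analysis: one must confirm that $\CP^r$ carries the comma-morphism $i_G^r$ to $i_G^r\times G$, that this is the map surviving the collapse in \lemref{comma-comma}, and that the compatibility $(i_G^r\times G)\circ j_G=j_G$ needed for the iterated-comma object holds. All of these follow from $\CP^r$ being a functor into the comma category $(C_0(G),\rt)\dn\AA(G)$, hence respecting the canonical maps $j_G$.
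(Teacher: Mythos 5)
Your architecture is exactly the paper's: Landstad duality for actions, then transport across $\CP^r$ via \lemref{comma equivalence}, then collapse of the iterated comma category via \lemref{comma-comma}; your bookkeeping of the canonical maps in the final step is also correct, including the observation that the compatibility $(i_G^r\times G)\circ j_G=j_G$ is automatic because $\CP^r$ is a functor into $(C_0(G),\rt)\dn\AA(G)$. The gap is in your first step. There is no ``reduced-coaction form'' of \cite[Theorem~4.1]{clda} available to invoke: that theorem gives an equivalence of $\AA(G)$ with $(C^*(G),\d_G)\dn\CC^n(G)$, a comma category under the \emph{full} group algebra carrying its dual coaction $\d_G$ (which is not itself normal), with object map $(A,\a)\mapsto(A\times_{\a,r}G,\what\a,i_G^{f,r})$. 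Your functor $L$ into $(C_r^*(G),\d_G^r)\dn\CC^r(G)$ is a genuinely different statement, and constructing it is precisely the substantive first half of the paper's proof.

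Concretely, two pieces of work are missing. First, since $\d_G$ is not normal, one must replace the comma object $(C^*(G),\d_G)$ by $(C_r^*(G),\d_G^n)$: every morphism $\phi$ from $(C^*(G),\d_G)$ to a normal coaction factors uniquely as $\phi^n\circ\lambda$ (this uses the normalization machinery of the Appendix), so $(B,\d,\psi)\mapsto(B,\d,\psi\circ\lambda)$ is an isomorphism of comma categories $(C_r^*(G),\d_G^n)\dn\CC^n(G)\to(C^*(G),\d_G)\dn\CC^n(G)$; its inverse converts $i_G^{f,r}$ into $i_G^r$. Second, one must pass from normal to reduced coactions: the reduction functor $\red$ of Theorem~\ref{boilerplate} is an isomorphism $\CC^n(G)\to\CC^r(G)$ carrying $(C_r^*(G),\d_G^n)$ to $(C_r^*(G),\d_G^r)$ and $\what\a$ to $\what\a^r$, so by \lemref{comma equivalence} it induces an equivalence $(C_r^*(G),\d_G^n)\dn\CC^n(G)\to(C_r^*(G),\d_G^r)\dn\CC^r(G)$. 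Only after composing the cited equivalence with these two steps do you obtain your $L$; and since Theorem~\ref{boilerplate} is itself a nontrivial result whose proof occupies the Appendix, this derivation cannot be absorbed into a citation. Once $L$ is in hand, the remainder of your argument goes through verbatim and agrees with the paper.
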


\begin{proof}
Theorem~4.1 of \cite{clda} says that the assignments
\[
(A,\a)\mapsto (A\times_{\a,r} G,\what\a,i_G^{f,r})\and\phi\mapsto \phi\times_r G
\]
give an equivalence between $\AA(G)$ and $(C^*(G),\d_G)\dn\CC^n(G)$. There is a subtlety here: $(C^*(G),\d_G)$ need not be a normal coaction, and the arrows in the comma category are morphisms in $\CC(G)$. However, the normalization of $\d_G$ is the full coaction $\d_G^n$ of $G$ on $C_r^*(G)$ \cite[Proposition~A.61]{enchilada}, and every morphism $\phi:(C^*(G),\d_G)\to (B,\d)$ with $\d$ normal has the form $\phi^n\circ\lambda$ for a unique morphism $\phi^n:(C_r^*(G),\d_G^n)\to (B,\d)$. (The existence of $\phi^n$
is proved in the Appendix as part of the assertion that normalization is a functor.) Thus the map $(B,\d,\psi)\mapsto (B,\d,\psi\circ\lambda)$ is an isomorphism
\[
(C^*_r(G),\d_G^n)\dn\CC^n(G)\to (C^*(G),\d_G)\dn\CC^n(G)
\]
of comma categories. Thus
\[
(A,\a)\mapsto (A\times_{\a,r} G,\what\a,i_G^{r})\and\phi\mapsto \phi\times_r G
\]
is a category equivalence between $\AA(G)$ and $(C^*_r(G),\d_G^n)\dn\CC^n(G)$, which we denote (locally) by $\RCP$.

The isomorphism $\red$ of Theorem \ref{boilerplate} carries $(C^*_r(G),\d_G^n)$ to $(C^*_r(G),\d_G^r)$, and hence by \lemref{comma equivalence} induces an equivalence
\[
{\red}\;{\wilde{}}:(C^*_r(G),\d_G^n)\dn\CC^n(G)\to (C^*_r(G),\d_G^r)\dn\CC^r(G).
\]
The quasi-inverse equivalence $\CP^r:\CC^r(G)\to (C_0(G),\rt)\dn\AA(G)$ of Theorem~\ref{Thmfromcldc} carries $(C_r^*(G),\d_G^r)$ to $(C_r^*(G)\times_{\d_G^r}G,\what{\d_G^r},j_G)$, and hence by Lemma~\ref{comma equivalence} induces an equivalence of $(C_r^*(G),\d_G^r)\dn\CC^r(G)$ with the iterated comma category
\[
(C^*_r(G)\times_{\d_G^r}G,\what{\d_G^r},j_G)
\dn\big((C_0(G),\rt)\dn\AA(G)\big).
\]
Thus by Lemma~\ref{comma-comma}, $\CP^r$ induces an equivalence
\[
\dn\!\!\CP^r:(C^*_r(G),\d_G^r)\dn\CC^r(G)\to (C_r^*(G)\times_{\d_G^r}G,\what{\d_G^r})\dn\AA(G).
\]
The composition $\dn\!\!\CP^r\circ{\red}\;{\wilde{}}\circ\RCP$ is the desired equivalence.
\end{proof}

\begin{rem}\label{newinfo}
This result contains new information about Landstad duality. It implies that a system $(C,\b)$ in $\AA(G)$ is isomorphic
to a double-dual action if and only if there exists a morphism $\phi:(C^*_r(G)\times_{\d_G^r}G,{\what{\d_G^r}})\to (C,\b)$. The underlying homomorphism comes from a covariant homomorphism $(\pi,\mu):=(\phi\circ j_{C^*_r(G)},\phi\circ j_G)$ of $(C^*_r(G),\d_G^r)$ in $M(C)$; $\pi$ is the integrated form of a unitary $U:G\to UM(C)$, and the covariance of $(\pi,\mu)$ is then equivalent to the covariance of $(\mu,U):(C_0(G),\lt)\to M(C)$
(see, for example, Example~2.9(1) of \cite{rae:representation}).
So Proposition~\ref{comma-cor} implies that
$(C,\b)$ is isomorphic
to a double-dual action if and only if there is a covariant homomorphism $(\mu,U)$ of $(C_0(G),\lt)$ in $M(C)$ such that $\mu\circ\rt_s=\b_s\circ\mu$ and $\b_s(U_t)=U_t$ for all $s,t\in G$. It seems to us that it might take some work to deduce this assertion directly from the non-categorical versions of Landstad duality in \cite{lan} and~\cite{Q:landstad}.
\end{rem}

\begin{rem}\label{normmax2}
There are several other versions of iterated Landstad duality.
First,
an argument parallel to that of \propref{comma-cor}
gives a dual
equivalence
\[
(B,\d)\mapsto \bigl(B\times_{\d} G\times_{\what\d,r}G, \what{\what\d}, j_G^B\times G\bigr)\and \phi\mapsto \phi\times G\times_r G
\]
between $\CC^r(G)$ and $(C_0(G)\times_{\rt}G,\what{\rt})\dn \CC^r(G)$
(here we used the equality $C_0(G)\times_{\rt}G=C_0(G)\times_{\rt,r}G$).
Other variants can then be
derived using the equivalence $\CC^m(G)\sim \CC^n(G)$ and the isomorphism $\CC^n(G)\cong \CC^r(G)$.
\end{rem}

\section{Naturality of Mansfield imprimitivity}\label{sec-man}

Suppose that $H$ is a closed subgroup of a locally compact group $G$. In this section, we apply Theorem~\ref{XAnatequiv},
with $(X,G)=(G,H)$,
to elements of the comma category $(C_0(G),\rt|)\dn \AA(H)$ of the form $(B\times_\d G,\hat\d|,j_G)$, where $\d$ is a reduced coaction.

Theorem~3.1 of \cite{HR:mansfield} says that the restriction $\hat\d|$ of the dual action to $H$ is proper with respect to Mansfield's subalgebra $\DD$ of $B\times_\d G$. Lemma~3.3 of \cite{HR:mansfield} says that $\DD\subset \mm$, and the right inner product used in \cite{HR:mansfield} is defined using our averaging process $E$ as $\langle x,y\rangle=E(x^*y)$, so that the generalized fixed-point algebra $(B\times_\d G)^{\hat\d|}$ obtained in \cite{HR:mansfield} is the closed span in $M(B\times_\d G)$ of the set $\{E(x^*y):x,y\in\DD\}$. The proof of Theorem~3.1 in \cite[pages~1157--58]{HR:mansfield} shows that this generalized fixed-point algebra is precisely the subalgebra $B\times_{\d,r}(G/H)$ of $M(B\times_\d G)$.

We know from \cite[Lemma~3.2]{HR:mansfield} that $\DD=C_c(G)\DD C_c(G)$, and hence $\DD$ is a subalgebra of our $A_0:=C_c(G)(B\times_\d G)C_c(G)$. Thus each $E(x^*y)$
for $x,y\in \DD$
belongs to $E(A_0)$, and $(B\times_\d G)^{\hat\d|}$ is contained in our fixed-point algebra $\fix(B\times_\d G,\hat\d,j_G)$. On the other hand, $\DD$ is dense in $B\times_\d G$ by \cite[Theorem~12]{man}, and it follows from the norm continuity of the map $a\mapsto E(fag)$ that every $E(fag)$ for $fag\in A_0$ belongs to $(B\times_\d G)^{\hat\d|}$. This, together with
the discussion in the preceding paragraph, yields
\begin{equation}\label{fix=cp}
\fix(B\times_\d G,\hat\d,j_G)=(B\times_\d G)^{\hat\d|}=B\times_{\d,r}(G/H).
\end{equation}

\begin{prop}
The assignments
\begin{equation}\label{assign}
(B,\d)\mapsto B\times_{\d,r}(G/H)\and
\p\mapsto \p\times G|
\end{equation}
define a functor from $\CC^r(G)$ to $\CC$.
\end{prop}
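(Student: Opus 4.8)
The plan is to exhibit the assignments in \eqref{assign} as a composite of functors that we have already constructed. Recall from \thmref{Thmfromcldc} that the crossed-product functor $\CP^r$ sends $(B,\d)\mapsto(B\times_\d G,\what\d,j_G)$ and $\p\mapsto\p\times G$, and takes values in the comma category $(C_0(G),\rt)\dn\AA(G)$ of $G$-systems. Composing this with the evident restriction functor
\[
R:(C_0(G),\rt)\dn\AA(G)\to(C_0(G),\rt|)\dn\AA(H),
\]
which restricts a $G$-action to the subgroup $H$ and leaves the underlying homomorphisms unchanged, lands us in the comma category to which our fixed-point functor applies. Thus $R\circ\CP^r$ sends $(B,\d)\mapsto(B\times_\d G,\what\d|,j_G)$ and $\p\mapsto\p\times G$.

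First I would check that $R$ is a functor, which is routine. If $\p\colon C_0(G)\to M(A)$ is equivariant for $(\rt,\a)$, then restricting the intertwining identity $\p\circ\rt_s=\a_s\circ\p$ to $s\in H$ shows that $\p$ is equivariant for $(\rt|,\a|)$, so $(A,\a|,\p)$ is an object of the target comma category; and a morphism $\s$ in the source, which intertwines the $G$-actions and satisfies $\s\circ\p=\psi$, still intertwines the restricted $H$-actions and respects the structure maps, so $R\s=\s$ is a morphism in the target. Since $R$ does nothing to the underlying homomorphisms, it preserves identities and composition. In particular $\p\times G$, which is $\what\d$--$\what\epsilon$ equivariant and satisfies $(\p\times G)\circ j_G=j_G$, becomes a morphism $(B\times_\d G,\what\d|,j_G)\to(C\times_\epsilon G,\what\epsilon|,j_G)$ in $(C_0(G),\rt|)\dn\AA(H)$.

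Finally I would postcompose with the fixed-point functor of \corref{fix functor}, applied to the proper right-translation action of $H$ on $X=G$. On objects this functor sends $(B\times_\d G,\what\d|,j_G)$ to $\fix(B\times_\d G,\what\d|,j_G)$, which by \eqref{fix=cp} is exactly $B\times_{\d,r}(G/H)$; on morphisms it sends $\s\mapsto\s|$, so it carries $\p\times G$ to $(\p\times G)|=\p\times G|$. Hence the composite $\fix\circ R\circ\CP^r$ agrees with the assignments \eqref{assign} on both objects and morphisms, and being a composite of functors it is itself a functor from $\CC^r(G)$ to $\CC$. The only ingredient specific to this section is the identification \eqref{fix=cp} of $\fix(B\times_\d G,\what\d|,j_G)$ with $B\times_{\d,r}(G/H)$, which is already established; everything else is the formal composition of functors already in hand, so I expect no genuine obstacle beyond recording these identifications.
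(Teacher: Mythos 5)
Your proposal is correct and follows essentially the same route as the paper: both factor the assignments as the composite $\fix\circ R\circ\CP^r$ of the crossed-product functor from Theorem~\ref{Thmfromcldc}, the restriction-to-$H$ functor on comma categories, and the fixed-point functor of Corollary~\ref{fix functor}, using the identification \eqref{fix=cp}. The only (harmless) difference is that you verify functoriality of the restriction functor $R$ directly, where the paper invokes the construction of Lemma~\ref{comma equivalence} applied to $(A,\a)\mapsto(A,\a|)$.
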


\begin{proof}
We know from \thmref{Thmfromcldc}
that the assignments
\[
(B,\d)\mapsto (B\times_{\d}G,\what\d,j_G)\and
\p\mapsto \p\times G
\]
define a functor $\CP^r$ from $\CC^r(G)$ to the comma category $(C_0(G),\rt)\dn\AA(G)$ (indeed, this functor is a category equivalence). The map $(A,\a)\mapsto (A,\a|)$ is
easily seen to be
a functor from $\AA(G)$ to $\AA(H)$,
so by Lemma~\ref{comma equivalence} induces a functor on comma categories. We showed in
Corollary~\ref{fix functor}
that $\fix$ is a functor from $(C_0(G),\rt|)\dn\AA(H)$ to $\CC$. The assignments in \eqref{assign} are the composition of these three functors
\begin{equation}\label{decompofcpGH}
\CC^r(G)\xrightarrow{\CP^r} (C_0(G),\rt)\dn\AA(G)\to (C_0(G),\rt|)\dn\AA(H)\xrightarrow{\fix} \CC,
\end{equation}
and the result follows.
\end{proof}

Since the fixed-point algebra and the inner product in Proposition~\ref{is-proper} are the same as those used in \cite[Theorem~3.1]{HR:mansfield}, the bimodule $\overline{\DD}$ from \cite{HR:mansfield} embeds as a closed Hilbert submodule of $Z(B\times_\d G)$. Since $H$ acts freely on $G$, the imprimitivity algebras of both $\overline{\DD}$ and $Z(B\times_\d G)$ are $(B\times_\d G)\times_{\what\d,r} H$. Thus
the Rieffel correspondence implies that $\overline{\DD}=Z(B\times_\d G)$, and the following theorem on the naturality of the Morita equivalence in \cite{HR:mansfield} follows from
\remref{intoBEcat}.

\begin{thm}
\label{mansfield natural}
The bimodules $Z(B\times_\d G)$ implement a natural isomorphism between the functors $(B,\d)\mapsto B\times_{\d,r}(G/H)$ and $(B,\d)\mapsto (B\times_{\d}G)\times_{\hat\d,r}H$ from $\CC^r(G)$ to $\CC_{\aug}$.
\end{thm}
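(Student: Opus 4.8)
The plan is to recognize that \thmref{mansfield natural} is essentially a repackaging of \thmref{XAnatequiv} combined with the identifications already established in equation \eqref{fix=cp} and the discussion immediately preceding the theorem statement. First I would invoke \thmref{XAnatequiv} in the special case $(X,G)=(G,H)$, applied to systems of the form $(B\times_\d G,\hat\d|,j_G)$ lying in the comma category $(C_0(G),\rt|)\dn\AA(H)$. That theorem already produces, for each morphism $\pi:(B,\d)\to(C,\epsilon)$ in $\CC^r(G)$ (which yields a morphism $\pi\times G$ in the comma category via the functor $\CP^r$ of \thmref{Thmfromcldc}), a commuting square of $C^*$-correspondences implemented by the bimodules $Z(B\times_\d G)$. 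The top and bottom horizontal arrows of that square are the Rieffel equivalences between $(B\times_\d G)\times_{\hat\d,r}H$ and $\fix(B\times_\d G,\hat\d,j_G)$.

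Next I would translate the fixed-point algebras appearing in that square into the homogeneous-space crossed products. By \eqref{fix=cp} we have $\fix(B\times_\d G,\hat\d,j_G)=B\times_{\d,r}(G/H)$, so the right-hand vertical and the horizontal target objects of the \thmref{XAnatequiv} square are precisely the values of the functor $(B,\d)\mapsto B\times_{\d,r}(G/H)$. Simultaneously, the left-hand side of the square involves the reduced crossed product $(B\times_\d G)\times_{\hat\d,r}H$, which is the value of the second functor in the statement. Thus the square of \thmref{XAnatequiv}, after these identifications, becomes exactly a naturality square relating the two functors named in \thmref{mansfield natural}.

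The one genuine subtlety is that \thmref{XAnatequiv} is stated using the bimodule $Z(B\times_\d G)$ built from $A_0:=C_c(G)(B\times_\d G)C_c(G)$, whereas the Mansfield equivalence of \cite{HR:mansfield} is built from the completion $\overline{\DD}$ of Mansfield's subalgebra $\DD$. Here I would appeal to the paragraph preceding the theorem: since $\DD\subset A_0$ and both modules use the same inner product $\langle x,y\rangle=E(x^*y)$ and the same generalized fixed-point algebra, $\overline{\DD}$ embeds as a closed Hilbert submodule of $Z(B\times_\d G)$; because $H$ acts freely on $G$, both modules have imprimitivity algebra $(B\times_\d G)\times_{\hat\d,r}H$, so the Rieffel correspondence forces $\overline{\DD}=Z(B\times_\d G)$. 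This identification is what lets us assert the naturality statement for the Mansfield bimodules themselves rather than merely for the abstractly defined $Z$.

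Finally, I would pass to the augmented category $\CC_{\aug}$ using \remref{intoBEcat}: composing $\fix$ and $\RCP$ with the functor $\phi\mapsto[\phi]$ of \remref{aug-rem} lets us interpret the commuting square as a commuting square of morphisms in $\CC_{\aug}$, so that the assignment $(B,\d)\mapsto Z(B\times_\d G)$ is literally a natural isomorphism between the two functors into $\CC_{\aug}$. The main obstacle I anticipate is not any single computation but rather verifying carefully that the morphism-level data line up: that the horizontal isomorphisms of \eqref{natofRieffel}, after the identifications $\fix(\cdot)=B\times_{\d,r}(G/H)$ and $\overline{\DD}=Z(\cdot)$, are genuinely the Mansfield equivalences and that $\CP^r$ correctly delivers each $\CC^r(G)$-morphism $\pi$ as a comma-category morphism $\pi\times G$ to which \thmref{XAnatequiv} applies. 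Once those bookkeeping identifications are in place, the result follows with essentially no further work.
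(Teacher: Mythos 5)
Your proposal is correct and takes essentially the same route as the paper: both factor the two functors through the comma category $(C_0(G),\rt|)\dn\AA(H)$ via $\CP^r$ followed by restriction to $H$, identify $\fix(B\times_\d G,\hat\d,j_G)$ with $B\times_{\d,r}(G/H)$ using \eqref{fix=cp} and the identification $\overline{\DD}=Z(B\times_\d G)$, and then obtain naturality from \thmref{XAnatequiv} via \remref{intoBEcat}. The only difference is organizational: the paper states the factorization tersely in its proof and places the bimodule identification in the discussion immediately preceding the theorem, whereas you have folded that bookkeeping into the argument itself.
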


\begin{proof}
We just need to observe that the second functor factors as
\[
\CC^r(G)\xrightarrow{\CP^r} (C_0(G),\rt)\dn\AA(G)\to (C_0(G),\rt|)\dn\AA(H)
\xrightarrow{\RCP} \CC_{\aug},
\]
where the first two functors are the same as the first two in the factorization \eqref{decompofcpGH} of $(B,\d)\mapsto B\times_{\d,r}(G/H)$, and
the third is the functor $\RCP$ in \remref{intoBEcat}, which by Theorem~\ref{XAnatequiv} is naturally isomorphic to $\fix$.
\end{proof}

\begin{appendix}

\section{Reduction and normalization of coactions}

We consider
the category $\CC^r(G)$ in which the objects $(B,\d)$ are (nondegenerate) reduced  coactions $\d:B\to M(B\otimes C_r^*(G))$,
and
the category $\CC(G)$ of (nondegenerate) full coactions $\d:B\to M(B\otimes C^*(G))$; in both categories,  the morphisms   $\p:(B,\d)\to (C,\epsilon)$ are the nondegenerate homomorphisms $\p:B\to M(C)$ such that \begin{equation}\label{defequivar}(\p\otimes\id)\circ\d=\epsilon\circ \p,
\end{equation}
though in $\CC^r(G)$, \eqref{defequivar} holds in $M(C\otimes C_r^*(G))$, and in $\CC(G)$, it holds in $M(C\otimes C^*(G))$. For systems in $\CC(G)$, covariance is defined using the function $w_G:s\mapsto i_G(s)$ from $G$ to $UM(C^*(G))$, whereas for systems in $\CC^r(G)$, it is defined using $W_G:s\mapsto \lambda_s$ (see~\eqref{defcov}). Each system $(B,\d)$ has a crossed product $(B\times_{\d}G,j_B,j_G)$ carrying a dual action $\what\d$ of $G$, and there are functors $\CP$ on $\CC(G)$ and $\CP^r$ on $\CC^r(G)$ which take a system $(B,\d)$ to the element $(B\times_\d G,\what\d,j_G)$ of the comma category $(C_0(G),\rt)\dn \AA(G)$.

We will be interested in the full subcategory $\CC^n(G)$ of $\CC(G)$ consisting of the \emph{normal} coactions, that is, those $(B,\d)$ in $\CC(G)$ for which $j_B:B\to M(B\times_{\d}G)$ is an injection.  For every object $(B,\d)$ in $\CC^n(G)$, the map $\d^r:=(\id\otimes\lambda)\circ\d$ is a reduced coaction of $G$ on the same $C^*$-algebra $B$, called the \emph{reduction} of $\delta$ (see \cite[Proposition~3.3]{qui:fullred}). If $\p:(B,\d)\to (C,\epsilon)$ is a morphism in $\CC^n(G)$, then applying $\id\otimes\lambda$ to both sides of \eqref{defequivar} shows that the underlying homomorphism $\p:B\to M(C)$ also gives a morphism $\p:(B,\d^r)\to (C,\epsilon^r)$ in $\CC^r(G)$. Indeed, the assignments
\[
(B,\d)\mapsto (B,\d^r)\and
\p\mapsto \p
\]
form a functor $\red:\CC^n(G)\to \CC^r(G)$.

The following theorem sums up the properties of the reducing process.

\begin{thm}\label{boilerplate}
For every locally compact group $G$, the functor $\red$ is an isomorphism between $\CC^n(G)$ and $\CC^r(G)$. The crossed product functors $\CP^n:=\CP|_{\CC^n(G)}$ and $\CP^r$ are related by $\CP^n=\CP^r\circ\red$.
\end{thm}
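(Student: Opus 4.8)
The plan is to treat the two assertions in turn, using at the outset that $\red$ alters neither the underlying $C^*$-algebra $B$ nor the underlying homomorphism $\phi$. Because of this, $\red$ is an isomorphism of categories exactly when it is a bijection on objects and a bijection on each morphism set (its set-theoretic inverse is then automatically a functor), and the equality $\CP^n=\CP^r\circ\red$ will come down to showing that the two crossed products attached to a normal coaction coincide. The forward implication for morphisms, that a morphism in $\CC^n(G)$ is also a morphism in $\CC^r(G)$, was already recorded just before the statement by applying $\id\otimes\lambda$ to \eqref{defequivar}; so the substantive tasks are to invert reduction on objects and on morphisms and to identify the crossed products.

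For the object statement I would appeal to Quigg's correspondence between full and reduced coactions \cite{qui:fullred}. That $\delta\mapsto\delta^r=(\id\otimes\lambda)\circ\delta$ sends a normal full coaction to a reduced coaction is \cite[Proposition~3.3]{qui:fullred}. Conversely, every reduced coaction on $B$ is the reduction of exactly one normal full coaction on $B$; this is the analytic core of the statement and is precisely what \cite{qui:fullred} provides. Hence $\red$ is a bijection on objects, with object-inverse the passage from a reduced coaction to its unique normal full lift.

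Only the reverse implication for morphisms then remains. Given normal coactions $\delta$ on $B$ and $\epsilon$ on $C$ and a nondegenerate $\phi:B\to M(C)$ with $(\phi\otimes\id)\circ\delta^r=\epsilon^r\circ\phi$, I must recover $(\phi\otimes\id)\circ\delta=\epsilon\circ\phi$ in $M(C\otimes C^*(G))$. Setting $x=(\phi\otimes\id)(\delta(b))$ and $y=\epsilon(\phi(b))$, the hypothesis says exactly that $(\id_C\otimes\lambda)(x)=(\id_C\otimes\lambda)(y)$, so everything hinges on cancelling $\id_C\otimes\lambda$ from this equation. This is where normality is indispensable: normality of the target coaction $\epsilon$ is exactly what renders $\id_C\otimes\lambda$ injective on the part of $M(C\otimes C^*(G))$ in which $x$ and $y$ lie (equivalently, $\epsilon^r$ is injective), and I would cite \cite{qui:fullred} for this injectivity. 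I expect this cancellation --- recovering full data from its reduction --- to be the main obstacle, and it is the single point at which normality, rather than arbitrary fullness, is genuinely used. With both implications established, $\phi\mapsto\phi$ is a bijection from $\mor_{\CC^n(G)}((B,\delta),(C,\epsilon))$ onto $\mor_{\CC^r(G)}((B,\delta^r),(C,\epsilon^r))$, so $\red$ is fully faithful and therefore an isomorphism of categories.

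For the remaining identity $\CP^n=\CP^r\circ\red$, I would show that for a normal coaction the full crossed product $B\times_\delta G$ and the reduced crossed product $B\times_{\delta^r}G$ are the same $C^*$-algebra carrying the same canonical maps. A pair $(\pi,\mu)$ is $\delta$-covariant if and only if it is $\delta^r$-covariant: applying $\id\otimes\lambda$ carries $w_G$ to $W_G$ and turns the full covariance identity into the reduced one \eqref{defcov}, while the converse is once more the normal-to-reduced cancellation just described. Thus the universal covariant pairs $(j_B,j_G)$ agree, the generated $C^*$-algebras are equal, and the dual actions coincide because each is determined by $\what\delta_t(j_B(b)j_G(f))=j_B(b)j_G(\rt_t(f))$. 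Consequently $\CP^n(B,\delta)=(B\times_\delta G,\what\delta,j_G)=(B\times_{\delta^r}G,\what{\delta^r},j_G)=\CP^r(\red(B,\delta))$ on objects; since both functors send a morphism $\phi$ to $\phi\times G$ by the same universal property, they also agree on morphisms, completing the proof.
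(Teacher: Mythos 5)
Your overall skeleton matches the paper's: bijectivity on objects via Quigg's theorem that every reduced coaction has a unique normal lift, faithfulness for free since $\red$ does not change the underlying homomorphism, fullness as the crux, and the identification $\CP^n=\CP^r\circ\red$ by matching covariant pairs. But the crux step is not proved. Your cancellation of $\id_C\otimes\lambda$ rests on the claim that normality of $\epsilon$ makes $\id_C\otimes\lambda$ injective ``on the part of $M(C\otimes C^*(G))$ in which $x$ and $y$ lie.'' What normality actually gives (and what the equivalence with injectivity of $\epsilon^r$ amounts to) is injectivity of $\id_C\otimes\lambda$ on $\epsilon(C)$, hence on $\epsilon(M(C))$. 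To deduce $x=y$ from $(\id\otimes\lambda)(x)=(\id\otimes\lambda)(y)$ you need injectivity on a \emph{single} subspace containing $x-y$, i.e.\ containing both $x=(\p\otimes\id)(\d(b))$ and $y=\epsilon(\p(b))$. No such subspace is exhibited, and the only one normality hands you is $\epsilon(M(C))$; but knowing $x\in\epsilon(M(C))$ is essentially the statement $x=y$ you are trying to prove, so the argument is circular. For non-amenable $G$ the map $\lambda$, hence $\id\otimes\lambda$, has a large kernel, and there is no a priori reason it should be injective on, say, the $C^*$-algebra generated by $(\p\otimes\id)(\d(M(B)))$ together with $\epsilon(M(C))$. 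The same unproved cancellation is then reused to show that $\d^r$-covariant pairs are $\d$-covariant, so it also undermines your proof of $\CP^n=\CP^r\circ\red$; the paper does not derive that equivalence but cites it (\cite[Proposition~3.8]{qui:fullred}, or \cite[Theorem~4.1]{rae:representation}).

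The paper's proof shows how to make a legitimate cancellation. It characterizes the unique normal lift $\d=(\d^r)^f$ inside the crossed product by
\[
j_B\otimes\id(\d(b))=\ad j_G^B\otimes\id(w_G)\,(j_B(b)\otimes 1)
\quad\text{in } M\bigl((B\times_{\d^r}G)\otimes C^*(G)\bigr),
\]
transports this formula along the crossed-product morphism $\p\times G$ (which exists because $\p$ is a morphism of \emph{reduced} coactions), and arrives at
\[
j_C\otimes\id\bigl(\p\otimes\id(\d(b))\bigr)=j_C\otimes\id\bigl(\epsilon(\p(b))\bigr).
\]
Here the map being cancelled is $j_C\otimes\id$, not $\id\otimes\lambda$: since $\epsilon$ is normal, $j_C$ is injective and nondegenerate, minimal tensor products preserve injectivity, and the multiplier extension of an injective nondegenerate homomorphism is injective, so $j_C\otimes\id$ is injective on \emph{all} of $M(C\otimes C^*(G))$ and no restriction to a mysterious subspace is needed. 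That detour through the crossed product is exactly the ``analytic core'' your proposal identifies but then assumes rather than proves; to repair your argument you would either reproduce this crossed-product computation or locate a citation that the equality of the two reduced compositions forces equality of the full ones, which is not a formal consequence of $\epsilon^r$ being injective.
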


There are choices implicit in the last assertion of the above theorem: a crossed product of a system $(B,\d)$ is by definition a triple $(C,j_B,j_G)$ which is universal for covariant homomorphisms, and to define the functors $\CP^n$ and $\CP^r$ we either need to nominate a particular construction which works for all $(B,\d)$ (as in \cite[\S3.1.2]{enchilada}), or choose a triple for each $(B,\d)$ using the axiom of choice (as advocated in \cite{clda}). The equality $\CP^n=\CP^r\circ\red$ means that, when we make a construction or choice for one of $\CP^n$ or $\CP^r$, that same construction or choice will work for the other.

\begin{proof}
To see that $\red$ is an isomorphism, it suffices to check that $\red$ is bijective on objects, faithful, and full.
If $(B,\d)$ is a reduced coaction, then Theorem~4.7 of \cite{qui:fullred} implies that there is a unique normal coaction $\d^f$ of $G$ on $B$ such that $\d=(\d^f)^r$, so $\red$ is bijective on objects. Since the underlying nondegenerate homomorphisms of $\p$ and $\red(\p)$ are the same, $\red$ is trivially faithful.
Checking fullness, though, seems to require some work:
we must show that if
$(B,\d)$, $(C,\epsilon)$ are objects in $\CC^n(G)$ and $\phi:(B,\d^r)\to (C,\epsilon^r)$ is a morphism in $\CC^r(G)$,
then $(\phi\otimes\id)\circ\d=\epsilon\circ\p$ as homomorphisms into $M(B\otimes C^*(G))$,
so that the underlying nondegenerate homomorphism $\phi:B\to M(C)$ is also a morphism in $\CC^n(G)$.

Theorem~4.7 of \cite{qui:fullred} says there is exactly one normal coaction with reduction $\d^r$, so we can identify $\d$ with the coaction $(\d^r)^f$ constructed in the proof of \cite[Theorem~4.7]{qui:fullred}. There it is proved that $\ad j_G\otimes\id(w_G)$ gives a full coaction of $G$ on the crossed product $B\times_{\d^r}G$, and that this coaction restricts to a full coaction on the subalgebra $j_B(B)$ of $M(B\times_{\d^r}G)$; $(\d^r)^f$ is the coaction on $B$ pulled back from the coaction $\ad j_G\otimes\id(w_G)$ via the isomorphism $j_B$. In other words, $\d=(\d^r)^f$ is characterized by
\begin{equation}\label{chardf}
j_B\otimes\id(\d(b))=\ad j_G\otimes\id(w_G)(j_B(b)\otimes 1).
\end{equation}
To help keep things straight, we denote the canonical map of $C_0(G)$ into $M(B\times_{\d^r}G)$ by $j_G^B$.

Since $\phi:(B,\d^r)\to (C,\epsilon^r)$ is a morphism in $\CC^r(G)$, there is a nondegenerate homomorphism $\phi\times G:B\times_{\d^r}G\to M(C\times_{\epsilon^r}G)$ such that
\[
\phi\times G(j_B(b)j_G^B(f))=j_C(\phi(b))j_G^C(f).
\]
This implies in particular that $(\phi \times G)\circ j_G^B=j_G^C$, so
\begin{equation}\label{phionwG}
(\p\times G)\otimes\id(j_G^B\otimes\id(w_G))=j_G^C\otimes\id(w_G).
\end{equation}
We now take $b\in B$ and compute, using $(\phi \times G)\circ j_B=j_C\circ\p$, \eqref{chardf}, \eqref{phionwG} and then \eqref{chardf} again:
\begin{align*}
j_C\otimes\id(\p\otimes\id(\d(b)))
&=(\p\times G)\otimes\id\big(j_B\otimes\id(\d(b))\big)\\
&=(\p\times G)\otimes\id\big(\ad j_G^B\otimes\id(w_G)(j_B(b)\otimes 1)\big)\\
&=\ad j_G^C\otimes\id(w_G)(j_C(\phi(b))\otimes 1)\\
&=j_C\otimes\id(\epsilon(\phi(b))),
\end{align*}
which, since $j_C$ is injective, implies that $\phi$ is a morphism in $\CC^n(G)$. Thus $\red$ is an isomorphism.

To finish, we deduce from
\cite[Proposition~3.8]{qui:fullred}
(or \cite[Theorem~4.1]{rae:representation}) that a triple $(C,\pi,\mu)$ is a crossed product of the reduction $(B,\d^r)$ if and only if it is a crossed product of $(B,\d)$, and hence $\CP^n=\CP^r\circ\red$.
\end{proof}

Next we consider the full subcategory $\CC^m(G)$ of $\CC(G)$ consisting of the \emph{maximal} coactions for which full crossed-product duality holds (see~\cite{ekrmax}). It was shown in \cite{clda} that normalization implements an equivalence between $\CC^m(G)$ and $\CC^n(G)$, and we want to know that this equivalence is compatible with crossed products.

If $(B,\d)$ is a full coaction, the \emph{normalization} introduced in \cite{qui:fullred} is a normal coaction $\d^n$ on a quotient $B^n$ of $B$; if $q_B:B\to B^n$ is the quotient map, then $\d^n$ is characterized by $\d^n\circ q_B=(q_B\otimes\id)\circ\d$. If $\phi:(B,\d)\to (C,\epsilon)$ is a morphism in $\CC(G)$, then there is a unique morphism $\phi^n:(B^n,\d^n)\to(C^n,\epsilon^n)$ in $\CC(G)$ such that $\phi^n\circ q_B=q_C\circ \phi$. (Since $(C^n,\epsilon^n)$ is normal, it has a covariant representation $(\pi,\mu)$ with $\pi$ faithful; now $(\pi\circ q_C\circ \phi,\mu)$ is a covariant representation of $(B,\d)$, and \cite[Proposition~2.6]{qui:fullred} implies that $\pi\circ q_C\circ\phi$ and $q_C\circ\phi$ factor through $q_B$.) Thus normalization is a functor $\nor:\CC(G)\to \CC^n(G)$.

\begin{prop}\label{CPOK}
The composition $\CP^n\circ\nor:\CC(G)\to (C_0(G),\rt)\dn\AA(G)$ is naturally isomorphic to $\CP$.
\end{prop}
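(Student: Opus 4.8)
The plan is to implement the natural isomorphism by the crossed products of the normalization quotient maps. For each object $(B,\d)$ of $\CC(G)$, the defining property $\d^n\circ q_B=(q_B\otimes\id)\circ\d$ of the normalization says precisely that $q_B$ is a morphism $(B,\d)\to(B^n,\d^n)$ in $\CC(G)$, and hence induces a nondegenerate homomorphism $q_B\times G\colon B\times_\d G\to M(B^n\times_{\d^n}G)$ characterized by
\[
(q_B\times G)\bigl(j_B(b)j_G(f)\bigr)=j_{B^n}(q_B(b))\,j_G^{B^n}(f).
\]
I claim that the maps $\eta_{(B,\d)}:=q_B\times G$ are the components of a natural isomorphism from $\CP$ to $\CP^n\circ\nor$, where $\CP(B,\d)=(B\times_\d G,\what\d,j_G)$ and $\CP^n(\nor(B,\d))=(B^n\times_{\d^n}G,\what{\d^n},j_G^{B^n})$.

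The substantive step, and the one I expect to be the main obstacle, is to show that each $\eta_{(B,\d)}$ is an isomorphism of $C^*$-algebras. This is where one must use the normalization theory of \cite{qui:fullred} rather than mere formal bookkeeping: the key point is that normalization leaves the crossed product unchanged, because every covariant homomorphism of $(B,\d)$ factors through $q_B$ and so corresponds to a covariant homomorphism of $(B^n,\d^n)$, and conversely. Concretely, I would invoke \cite[Proposition~2.6]{qui:fullred} together with the universal properties of the two crossed products to conclude that a triple is a crossed product of $(B,\d)$ if and only if it is a crossed product of $(B^n,\d^n)$, exactly as in the reduction argument in the proof of Theorem~\ref{boilerplate}; it follows that $q_B\times G$ maps $B\times_\d G$ isomorphically onto $B^n\times_{\d^n}G$.

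The remaining verifications are routine checks on the spanning elements $j_B(b)j_G(f)$. To see that $\eta_{(B,\d)}$ is a morphism in the comma category $(C_0(G),\rt)\dn\AA(G)$, I would use the characterization $\what\d_t(j_B(b)j_G(f))=j_B(b)j_G(\rt_t(f))$, together with its counterpart for $\what{\d^n}$, to obtain $(q_B\times G)\circ\what\d_t=\what{\d^n}_t\circ(q_B\times G)$, so that $\eta_{(B,\d)}$ is equivariant for the dual actions; and the defining formula for $q_B\times G$ gives $(q_B\times G)\circ j_G=j_G^{B^n}$ at once, so $\eta_{(B,\d)}$ respects the distinguished morphisms from $(C_0(G),\rt)$. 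Combined with the previous paragraph, this shows that $\eta_{(B,\d)}\colon\CP(B,\d)\to\CP^n(\nor(B,\d))$ is an isomorphism in the comma category.

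Finally I would check naturality. Given a morphism $\phi\colon(B,\d)\to(C,\epsilon)$ in $\CC(G)$, the functors send it to $\CP(\phi)=\phi\times G$ and $\CP^n(\nor(\phi))=\phi^n\times G$, and I must show that the square
\[
\xymatrix{
B\times_\d G \ar[r]^-{q_B\times G} \ar[d]_{\phi\times G}
&B^n\times_{\d^n} G \ar[d]^{\phi^n\times G}
\\
C\times_\epsilon G \ar[r]_-{q_C\times G}
&C^n\times_{\epsilon^n} G
}
\]
commutes. Evaluating both composites on $j_B(b)j_G(f)$ and using the defining relation $\phi^n\circ q_B=q_C\circ\phi$ of the normalization functor shows that each route produces $j_{C^n}(q_C(\phi(b)))\,j_G^{C^n}(f)$, so the square commutes on a dense subspace and hence everywhere. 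This exhibits $\{\eta_{(B,\d)}\}$ as the desired natural isomorphism.
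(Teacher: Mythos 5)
Your proposal is correct and follows essentially the same route as the paper: the natural isomorphism is implemented by the maps $q_B\times G$, their bijectivity is obtained from \cite[Proposition~2.6]{qui:fullred} via the matching of covariant homomorphisms, and naturality comes down to the defining relation $\phi^n\circ q_B=q_C\circ\phi$. The only difference is stylistic: where you verify the comma-category compatibility and the commuting square by hand on the spanning elements $j_B(b)j_G(f)$, the paper gets both for free by applying the functor $\CP$ to the equation $\phi^n\circ q_B=q_C\circ\phi$ of morphisms in $\CC(G)$.
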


\begin{proof}
If $(B,\d)$ is a full coaction, then the induced map $q_B\times G$ is an isomorphism of $B\times_\d G$ onto $B^n\times_{\d^n} G$ (see \cite[Proposition~2.6]{qui:fullred}); we claim that these isomorphisms are natural. Suppose $\phi:(B,\d)\to (C,\epsilon)$ is a morphism in $\CC(G)$. Then $\phi^n:B^n\to M(C^n)$ is also a nondegenerate homomorphism, and satisfies $\phi^n\circ q_B=q_C\circ \phi$. But now the functoriality of $\CP:\CC(G)\to (C_0(G),\rt)\dn\AA(G)$ implies that
\[
(\phi^n\times G)\circ (q_B\times G)=(q_C\times G)\circ (\phi\times G),
\]
and hence we have the naturality of the isomorphisms $q_B\times G$.
\end{proof}

\begin{cor}\label{CPOKmax}
Normalization implements an equivalence between $\CC^m(G)$ and $\CC^n(G)$, and $\CP^n\circ\nor$ is naturally isomorphic to $\CP^m:=\CP|_{\CC^m(G)}$.
\end{cor}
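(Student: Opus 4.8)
The plan is to obtain both assertions by specializing results we already have to the full subcategory $\CC^m(G)\subset\CC(G)$. The first assertion---that $\nor$ restricts to an equivalence between $\CC^m(G)$ and $\CC^n(G)$---is exactly the result of \cite{clda} recalled in the paragraph preceding the statement, so there is nothing new to prove there; I would simply record that $\nor|_{\CC^m(G)}\colon\CC^m(G)\to\CC^n(G)$ is an equivalence. All the genuine content is therefore in the second assertion.

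For that, I would invoke \propref{CPOK}, which provides a natural isomorphism $\Theta\colon\CP^n\circ\nor\iso\CP$ of functors on the \emph{full} category $\CC(G)$; concretely, $\Theta$ is implemented by the isomorphisms $q_B\times G\colon B\times_\d G\to B^n\times_{\d^n}G$. The key observation is that any natural isomorphism restricts along the inclusion of a subcategory to a natural isomorphism of the restricted functors: taking the components $\Theta_{(B,\d)}$ only at the maximal coactions $(B,\d)\in\obj\CC^m(G)$ yields a natural transformation between the restrictions of $\CP^n\circ\nor$ and $\CP$ to $\CC^m(G)$, and each such component is still an isomorphism because it already was one in $\CC(G)$.

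It then remains to identify the two restricted functors with the ones named in the statement. By definition $\CP^m=\CP|_{\CC^m(G)}$, so the restriction of the codomain of $\Theta$ is precisely $\CP^m$. On the domain side, the restriction of $\CP^n\circ\nor$ to $\CC^m(G)$ is the composite
\[
\CC^m(G)\xrightarrow{\ \nor\ }\CC^n(G)\xrightarrow{\ \CP^n\ }(C_0(G),\rt)\dn\AA(G),
\]
which is exactly $\CP^n\circ\nor$ read as a functor out of $\CC^m(G)$. Hence the restricted $\Theta$ is the asserted natural isomorphism $\CP^n\circ\nor\cong\CP^m$.

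I do not anticipate any real obstacle here: the substantive work was already carried out in \propref{CPOK} and in \cite{clda}, and the corollary is the routine matter of restricting an established natural isomorphism to a subcategory and matching up the functors. The only point meriting a moment's care is that the relevant components of $\Theta$ indeed sit over maximal coactions, which is automatic since $\CC^m(G)$ is a (full) subcategory of $\CC(G)$.
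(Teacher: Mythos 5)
Your proposal is correct and follows exactly the paper's own route: the paper cites \cite[Theorem~3.3]{clda} for the equivalence on $\CC^m(G)$ and then says the rest ``follows from Proposition~\ref{CPOK}'', which is precisely the restriction-to-a-full-subcategory argument you spell out. You have merely made explicit the routine step the paper leaves implicit.
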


\begin{proof}
It is proved in \cite[Theorem~3.3]{clda} that $\nor$ is an equivalence on the subcategory $\CC^m(G)$, and the rest follows from Proposition~\ref{CPOK}.
\end{proof}

\end{appendix}


\providecommand{\bysame}{\leavevmode\hbox to3em{\hrulefill}\thinspace}
\providecommand{\MR}{\relax\ifhmode\unskip\space\fi MR }
\providecommand{\MRhref}[2]{%
  \href{http://www.ams.org/mathscinet-getitem?mr=#1}{#2}
}
\providecommand{\href}[2]{#2}

\end{document}